\numberwithin{equation}{section}
\theoremstyle{plain}
 \newtheorem{theorem}{Theorem}[section]
 \newtheorem{lemma}[theorem]{Lemma}
 \newtheorem{proposition}[theorem]{Proposition}
 \newtheorem{corollary}[theorem]{Corollary}
\theoremstyle{definition}
 \newtheorem{definition}[theorem]{Definition}
 \newtheorem{remark}[theorem]{Remark}
\theoremstyle{remark}
 \newtheorem{case}{Case}
\newenvironment{enumeratei}{\begin{enumerate}[\quad\upshape (i)]} {\end{enumerate}}
\newcommand \rtau {\mathrel\tau}
\newcommand \nrtau {\mathrel{\overline\tau}}
\newcommand \rpi {\mathrel\pi}
\newcommand \rrho {\mathrel\rho}
\newcommand \pT {\prec_T}
\newcommand \npT {\not\prec_T}
\newcommand \cht {\mathsf C_2} 
\newcommand \dbl[2] {{[#1\ast_{2}#2]}}
\newcommand \defiff {\overset {\textup{def}}\iff}
\newcommand \prho {\prec_\rho}
\newcommand \lrho {<_\rho}
\newcommand \lqrho {\leq_\rho}
\newcommand \ppi {\prec_\pi}
\newcommand \aL {L\times\cht}
\newcommand \cref[1] {\hbox{\textup{(cov-\ref{#1})}}}
\newcommand \fcref[1] {\textup{(cov-$#1$)}}
\newcommand \pref[1] {\textup{\hbox{(cov$^\ast$-\ref{#1})}}}
\newcommand \fpref[1] {\textup{(cov$^\ast$-$#1$)}}
\newcommand \poset[1]{(#1;<)}
\newcommand \alg[1]{{\mathcal #1}}
\newcommand \Pow [1] {\textup{Pow}(#1)}
\newcommand \PowP {\Pow{\alg P}}
\newcommand \CoalLattxt{CoalL}
\newcommand \Coal [1] {\textup{\CoalLattxt}(#1)}
\newcommand \CoalP{\textup{\CoalLattxt}(\alg P)}
\newcommand \pCoalP{\textup{\CoalLattxt}(\alg P')}
\newcommand \whCoalP{\textup{\CoalLattxt}(\widehat{\alg P})}
\newcommand \iCoalP [1]{\textup{\CoalLattxt}(\alg P_{#1})}
\newcommand \ieCoal [1] {\textup{\CoalLattxt}(#1)=(\Pow{#1};\leq)}
\newcommand \ieCoalP {\ieCoal{\alg P}}
\renewcommand \phi{\varphi}
\newcommand \ideal {\mathord{\downarrow}}
\newcommand \filter {\mathord{\uparrow}}
\newcommand\set [1]{\{#1\}}
\newcommand \tbf[1] {\textbf{#1}}
\newcommand \nonparallel {\mathrel{\not\kern-1.5pt{\mathop\parallel}}}
\newcommand \height {h}
\newcommand \idmap[1] {\textup{id}_{#1}}
\newcommand \restrict [2] {#1\rceil_{#2}}
\newcommand \tstr {\textup{str}}
\newcommand \str [1] {\tstr(#1)}
\newcommand \pstr [2] {\tstr_{#1}(#2)}
\newcommand \nlat[1]{(#1;\leq,\vee,\wedge)}
\renewcommand \epsilon {\varepsilon}
\newcommand \qn {qn}
\newcommand \red [1] {{\color{red}#1\color{black}}}
\newcommand \nothing [1] {}
\newcommand \magenta [1] {{\color{magenta}#1\color{black}}}
\begin{document}
\title
[Doubling tolerances and coalition lattices]
{Doubling tolerances and coalition lattices}

\author[G.\ Cz\'edli]{G\'abor Cz\'edli}
\address{University of Szeged, Bolyai Institute, Szeged,
Aradi v\'ertan\'uk tere 1, Hungary 6720}
\email{czedli@math.u-szeged.hu}
\urladdr{http://www.math.u-szeged.hu/~czedli/}

\thanks{This research was supported by NFSR of Hungary (OTKA), grant
number K 115518}

\subjclass{006B99, 06C99, 06D99}




\keywords{Lattice tolerance, modular lattice, coalition lattice}

\dedicatory{Dedicated to the memory of Ivo G. Rosenberg}

\date{\magenta{\emph{\tbf{Always}} check the author's homepage for updates!} \kern 1cm \hfill \red{December 10, 2019}}

\begin{abstract} If every block of a (compatible) tolerance (relation) $T$ on a modular lattice $L$ of finite length consists of at most two elements, then we call $T$ a \emph{doubling tolerance} on $L$. We prove that, in this case, $L$ and $T$ determine a modular lattice of size $2|L|$. This construction preserves distributivity and modularity. In order to give an application of the new construct,  let $P$ be a partially ordered set (poset). Following a 1995 paper by G.\ Poll\'ak and the present author, the subsets of $P$ are called the \emph{coalitions} of $P$. For coalitions $X$ and $Y$ of $P$, let $X\leq Y$ mean that there exists an injective map $f$ from $X$ to $Y$ such that $x\leq f(x)$ for every $x\in X$. If $P$ is a finite chain, then its coalitions form a distributive lattice by the 1995 paper; we give a new proof of the distributivity of this lattice by means of doubling tolerances. 
\end{abstract}

\maketitle

\section{Introduction}
There are two words in the title that are in connection with Ivo G. Rosenberg; these words are ``tolerance'' and ``lattice'', both occurring also in the title  of our joint lattice theoretical paper \cite{chczgigr} (coauthored also by I.\ Chajda). This fact encouraged me to submit the present paper to a special volume dedicated to Ivo's memory even if this volume does not focus on lattice theory.

The paper is structured as follows. In Section \ref{sectiontol}, after few historical comments on lattice tolerances, we introduce the concept of doubling tolerances (on lattices) as those tolerances whose blocks are at most two-element.  We prove that finite modular lattices can be ``doubled'' with the help of this tolerances; see Theorem~\ref{thmdoubling}. Furthermore, this doubling construction preserves modularity and distributivity.
In Section~\ref{sectionfincoal}, after recalling the concept of  coalition lattices of certain finite posets (partially ordered sets) and some related results and after presenting some new observations, we use our doubling construction to give a new proof of the fact that coalition lattices of finite chains are distributive; see Lemma~\ref{lemmazBfmWvVjGtRx}.
Also, this new proof provides a natural example of doubling tolerances and our construct. 

Note in advance that every structure in this paper is assumed to be of \emph{finite length} even if this is not always emphasized.

\section{Doubling tolerances}\label{sectiontol}
Tolerances (that is, compatible tolerance relations) of lattices were first investigated by Chajda and Zelinka~\cite{chajdazelinka}. They are reflexive and symmetric  relations preserved by both lattice operations. Tolerances on lattices have been studied for long; see, for example Bandelt~\cite{bandelt}, Chajda~\cite{chajdabook}, Cz\'edli~\cite{czglperrho}, Cz\'edli and Gr\"atzer~\cite{czggg}, Grygiel and Radeleczki~\cite{grygrad}, and Kindermann~\cite{kindermann}.
Apparently, apart from some artificial constructs like those in  Chajda, Cz\'edli, and  Hala\v s~\cite{chczgrh}, tolerances seem to be interesting only in lattices and lattice-like structures.

A lattice $L$ is of \emph{finite length} if there is a natural number $n$ such that no chain in $L$ has more than $n+1$ elements; if so then the least such $n$ is the \emph{length} of $L$. 
For a tolerance $T$ on a lattice $L$ of finite length, maximal subsets $X$ of $L$ such that $X\times X\subseteq T$ are called the \emph{blocks} of $T$; they are known to be intervals; see, for example, Cz\'edli~\cite{czglperrho}. By \cite[Proposition 1]{czglperrho} and its dual, 
\begin{equation}
\text{for arbitrary blocks $[a,b]$ and $[c,d]$ of $T$, $a=c\iff b=d$}.
\label{eqtxtblTBlnl}
\end{equation}
If $L$ (of finite length as always) is modular and  no block of $T$ has more than two elements or, equivalently, if the ``covering or equal'' relation  $a\preceq b$  holds for every block $[a,b]$ of $T$, then $T$ will be called a \emph{doubling tolerance} on $L$. 
Let us emphasize that we define doubling tolerances only on \emph{modular} lattices \emph{of finite length}t.
If $T$ is a doubling tolerance on $L$, then its intersection with the covering relation  $\prec$ (or, equivalently, with the strict lattice ordering $<$) will be denoted by $\pT$. That is, 
$a\pT b\iff [a,b]$ is a two-element block of $T$. Note that $\pT$ determines $T$.
We will also need the negated relation: $a \npT b$ will mean that $a\pT b$ fails.
The two-element chain will be denoted by $\cht:=\set{0,1}$; with $0\prec 1$, of course. 
The direct product order on $L\times \cht$ will be denoted by $\leq$ or $\pi$. That is, each of $(a,i)\leq (b,j)$,  $(a,i)\rpi (b,j)$, and $((a,i),(b,j))\in\pi$ means that $a\leq b$ in $L$ and $i\leq j$ in $\cht$. On $L\times \cht$, we define a relation $\tau$ as follows: 
\[
\text{$(a,i)\rtau (b,j)$\quad $\defiff$ \quad  $i=1$, $j=0$ and $a\pT b$.}
\] 
We will also need the negated relation:
\[
\text{$(a,i)\nrtau (b,j)$\quad $\defiff$ \quad $(a,i)\rtau (b,j)$ does not hold.}
\]
Let $\rho$ the transitive closure of $\pi\cup\tau$; it is a relation on $\aL$. Finally, we denote the structure $(\aL;\rho)$ by $\dbl L T$. The subscript of $\ast$ reminds us that $|L\times \cht|=2\cdot|L|$, so what we do is \emph{doubling} in some sense. With the concepts and notations introduced above, we are now in the position to formulate our main result. 

\begin{figure}[htb] 
\centerline
{\includegraphics[scale=1.0]{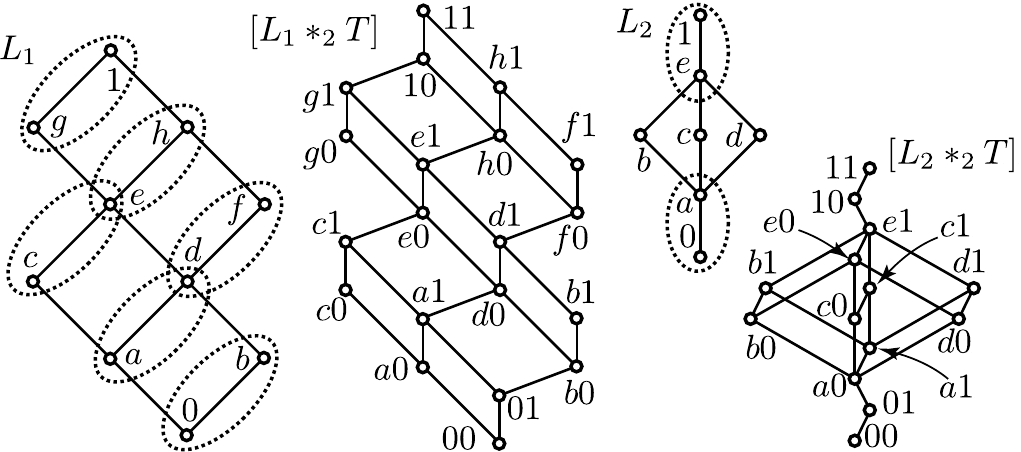}}
\caption{Two examples; the $T$-blocks are the dotted ovals
\label{figura}}
\end{figure}%

\begin{theorem}\label{thmdoubling}
If $T$ is a doubling tolerance on a modular lattice $L$ of finite length, then $\dbl LT=(\aL;\rho)$ is also a modular lattice of finite length. If, in addition, $L$ is distributive, then so is $\dbl LT$.
\end{theorem}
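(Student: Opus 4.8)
The plan is to show that $\rho$ is a partial order on $\aL$ and then to identify the joins and meets explicitly, verifying modularity (and distributivity) by direct comparison with the known modular (distributive) lattice $L\times\cht$. The first task is antisymmetry of $\rho$: since $\rho$ is by definition the transitive closure of $\pi\cup\tau$, I would analyze what a $\rho$-path from $(a,i)$ to $(b,j)$ looks like. A $\tau$-step strictly decreases the first coordinate (because $a\pT b$ forces $a<b$, so going $\tau$-forward means $b<a$ — wait, let me restate: $(a,1)\rtau(b,0)$ requires $a\pT b$, i.e.\ $a\prec b$). So a $\tau$-step moves from level $1$ to level $0$ while moving \emph{up} in $L$, whereas $\pi$-steps are monotone in both coordinates. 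The key combinatorial observation is that along any $\rho$-path at most one $\tau$-step can occur (once we are at level $0$ we can never return to level $1$ via $\pi$, and $\tau$ only goes from level $1$ to level $0$). Hence $(a,i)\rho(b,j)$ holds iff either (a) $(a,i)\leq(b,j)$ in $\pi$, or (b) there exist $c\pT d$ with $(a,i)\leq(c,1)$ and $(d,0)\leq(b,j)$, i.e.\ $i$ is arbitrary but the second case needs a witnessing covering pair of $T$ straddled by $a\le c\prec d\le b$. Using \eqref{eqtxtblTBlnl} and modularity of $L$, I would show case (b) forces $a<b$ strictly (or $i=1,j=0$), which yields antisymmetry; reflexivity and transitivity are immediate from the closure definition.

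Next I would pin down the lattice operations. Write $x=(a,i)$, $y=(b,j)$. The candidate meet is the $\pi$-meet $(a\wedge b,\ i\wedge j)$ \emph{adjusted down by $\tau$ when possible}, and dually for the join; concretely I expect $x\wedge_\rho y$ to equal $(a\wedge b, i\wedge j)$ except in the configuration where $i=j=1$ but $a\wedge b$ has a $\pT$-partner below it compatible with being dominated, in which case the meet drops to level $0$. The cleanest route is: first check that $\dbl LT$ is a \emph{bounded} poset with the same top $(1_L,1)$ and bottom $(0_L,0)$ as $L\times\cht$, then show every pair has an infimum by exhibiting it, then invoke that a finite poset in which every pair has an infimum and which has a top element is a lattice. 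I would organize the infimum computation by cases on the "straddling" description of $\rho$ from the previous paragraph, using \eqref{eqtxtblTBlnl} and the fact that $T$-blocks are intervals of length $\le 1$ to guarantee the relevant $\pT$-partners are unique when they exist.

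For modularity and distributivity, rather than checking the identities elementwise I would set up a comparison map to $L\times\cht$. The underlying sets coincide; the orders $\rho$ and $\pi$ differ only by the extra relations coming from $\tau$. I would argue that $\dbl LT$ arises from $L\times\cht$ by a sequence of "interval-doubling"-type modifications localized at the prime intervals $(a,0)\prec(a,1)$ with $a$ not minimal in its block and $(b,1)\prec$-related stuff — more precisely, I expect $\dbl LT$ to be obtainable from $L$ itself by Day's doubling construction applied simultaneously to the (pairwise disjoint, by \eqref{eqtxtblTBlnl}) two-element blocks of $T$, and Day's construction is known to preserve modularity and distributivity. So the strategy is: (i) prove the structure is a lattice by the explicit infimum argument, (ii) prove $\dbl LT\cong L[I]$ for the appropriate family of intervals $I$ doubled by Day's construction, or failing a clean match, (iii) verify the modular law $x\le z\implies x\vee(y\wedge z)=(x\vee y)\wedge z$ directly using the case analysis, splitting on the levels of $x,y,z$ and reducing each case to modularity of $L$ or of $\cht$. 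The main obstacle I anticipate is step (i)/(iii): showing closure under the operations and the modular law requires controlling how a single $\tau$-step interacts with $\pi$-comparabilities across different levels, and the case analysis — especially ruling out the "non-modular pentagon" configuration — is where modularity of $L$ and property \eqref{eqtxtblTBlnl} must be used essentially; in a non-modular $L$ one expects genuine counterexamples, so no shortcut around modularity is available. The distributive case should then follow by the same case analysis with "modular" strengthened to "distributive" throughout, or for free if the Day-doubling identification in (ii) goes through.
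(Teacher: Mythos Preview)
Two concrete errors. First, your claim that ``along any $\rho$-path at most one $\tau$-step can occur'' is false: a $\pi$-step can go from $(x,0)$ to $(y,1)$ whenever $x\le y$, so after a $\tau$-step drops you to level $0$ you can climb back to level $1$ and take another $\tau$-step. (Your resulting description of $\rho$ is nonetheless salvageable, since the \emph{first} $\tau$-step of any sequence already supplies a witness $c\pT d$ with $a\le c\prec d\le b$.) Second, and more seriously, Day's interval doubling does \emph{not} preserve modularity: doubling a single atom of $M_3$ already produces a pentagon $N_5$. Nor are the $T$-blocks pairwise disjoint in general --- \eqref{eqtxtblTBlnl} says only that a block is determined by either endpoint, so $a\pT b$ and $b\pT c$ can coexist --- and in any case doubling $k$ two-element intervals inside $L$ yields $|L|+2k$ elements, not $2|L|$. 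So strategy (ii) collapses, and you are left with the unexecuted case analyses of (i) and (iii).

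The paper's route is quite different from what remains of your plan. It first characterizes the covering relation $\prho$ explicitly (three types), then applies the BEZ lemma of Bj\"orner--Edelman--Ziegler (a bounded finite-length poset is a lattice if any two elements with a common lower cover have a join) by showing that in $\dbl LT$ such a pair always sits in a covering \emph{square}; modularity then follows from Jakub\'\i{}k's forbidden cover-preserving sublattice characterization, and distributivity by further excluding a covering $M_3$. The real technical work --- and the place modularity of $L$ is indispensably used --- is an auxiliary statement (\eqref{pbxmcshKjqsJ} in the paper) controlling how a non-$T$ edge $u\prec v$ interacts with a relation $(u,1)\rrho(w,0)$; this relies on the isomorphism theorem for intervals in modular lattices and is precisely the ingredient your sketch is missing.
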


\begin{remark} If $T$ is the equality relation, all of whose blocks are singletons, then $\dbl LT$ is simply the direct product of $L$ and $\cht$. Some examples where $T$ is not the equality relation are given in Figures~\ref{figura} and Figure~\ref{figurb}.
\end{remark}

\begin{figure}[htb] 
\centerline
{\includegraphics[scale=1.0]{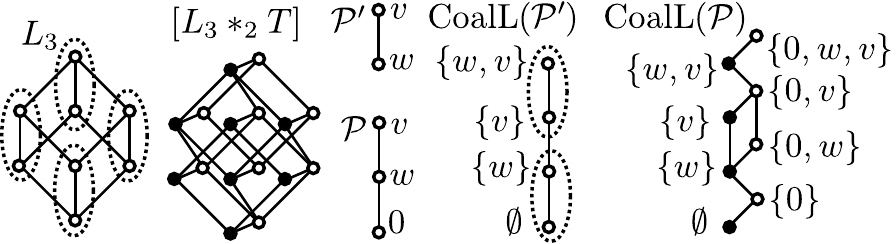}}
\caption{Two further examples; the lower elements are black filled and $\dbl{\pCoalP}T\cong\CoalP$
\label{figurb}}
\end{figure}%

\begin{proof}[Proof of Theorem~\ref{thmdoubling}] Let $T$ be a doubling tolerance on a modular lattice $L$ of finite length. The elements of $L\times\set0$ will be called \emph{lower elements} while those of $L\times \set 1$ are the \emph{upper elements}. 
Observe that, for any $(a,i),(b,j)\in L\times \cht$, 
\begin{equation}
\text{if  $(a,i)\rrho (b,j)$, then $a\leq b$ in $L$;}
\label{eqbzKnknSrVd}
\end{equation}
simply because both $\pi$ and $\tau$ have the same property. 
By a \emph{$(\pi\cup\tau)$-sequence} from $(a,i)\in \aL$ to $(b,j)\in L\times \cht$ we mean a finite sequence
\begin{equation}
 (a,i)=(x_0,k_0) \mathrel{\pi\!\cup\!\tau} (x_1,k_1)
   \mathrel{\pi\!\cup\!\tau} \dots  \mathrel{\pi\!\cup\!\tau} (x_n,k_n)=(b,j)
\label{eqprsrzT}
\end{equation}
of elements of $\aL$. By the definition of $\rho$, we have that $(a,i)\rrho (b,j)$ if and only if there is a sequence described in \eqref{eqprsrzT}.
In order to show that $\rho$ is antisymmetric, assume that $(a,i)\rrho (b,j)$ and $(b,j)\rrho (a,i)$. By \eqref{eqbzKnknSrVd}, $a=b$, whereby the only way of violating antisymmetry is  that $(a,1)\rrho (a,0)$. Take a $(\pi\cup\tau)$-sequence from $(a,1)$ to $(a,0)$; see \eqref{eqprsrzT}. Since only a ``$\tau$-step'' in this sequence can change an upper element to a lower element, at least one $\tau$-step occurs in this sequence. But this step strictly increases the first component, whereby it follows from \eqref{eqbzKnknSrVd} that the first component cannot remain $a$ at the end of the sequence. Thus, $(a,1)\rrho (a,0)$ is impossible and $\rho$ is antisymmetric. The reflexivity of $\rho$ follows from $\rho\subseteq \pi$ while $\rho$ is transitive by its definition. Hence, $\dbl L T$ is a poset; note that the modularity of $L$ has not yet been used. 

Now that we know that $\rho$ is a partial ordering, we can speak about the corresponding covering relation, which will be denoted by $\prho$. Self-explanatory analogous notations, like $\lrho$, $\lqrho$, or $\ppi$ will also be used. If the superscript is dropped, then the meaning should be clear for the context since $\leq$, $\prec$, $<$ refer to the original ordering of $L$ when they are applied for the elements of $L$, and they refer to $\rho$ between elements of $\dbl LT$. 
We claim that, for arbitrary $(a,i),(b,j)\in \aL$, we have that $(a,i)\prho (b,j)$ if and only if one of the following three possibilities holds.
\begin{enumerate}[\qquad({cov-}1)]
\item\label{caa} $a=b$, $i=0$, and $j=1$.
\item\label{cab} $i=j$, $a\prec b$ in $L$, and $a\npT b$  ( that is, $[a,b]$ is not a $T$-block).
\item\label{cta} $i=1$, $j=0$, and $a\pT b$.
\end{enumerate}
In order to verify that the disjunction of \cref{caa}, \cref{cab}, and 
\cref{cta} describes $\prho$ correctly, assume that $(a,i)\prho (b,j)$. Take a \emph{repetition-free} $(\pi\cup\tau)$-sequence from $(a,i)\in \aL$ to $(b,j)$; see \eqref{eqprsrzT}. We can assume that every $\pi$-step in this sequence is a $\prec_\pi$-step. 
Clearly, the sequence consists of a single step, which is either a $\tau$-step, corresponding to \cref{cta}, or it is a $\prec_\pi$-step $(a,0)\prec_\pi (a,1)$ or $(a,i)\prec_\pi (b,i)$ with $a\prec b$ in $L$; furthermore, if $(a,i)\prec_\pi (b,i)$ with $a\prec b$ in $L$, then $a\npT b$ since otherwise $(a,1)\rtau (b,0)$ and $(a,0)<_\rho (a,1)<_\rho (b,0)$ (for $i=j=0$) or 
$(a,1)<_\rho (b,0)<_\rho (b,1)$ (for $i=j=1)$ would contradict $(a,i)\prho (b,j)$. So whenever $(a,i)\prho (b,j)$, then at least one of (and exactly one of) \cref{caa}, \cref{cab}, and \cref{cta} holds. 
Since a $\tau$-step of a $(\pi\cup\tau)$-sequence increases the first component, it follows that \cref{caa}, which a special sort of a $\pi$-covering, is indeed a $\rho$-covering. 
Since $a\npT b$ in \cref{cab}, it follows from \eqref{eqbzKnknSrVd} that no $\tau$-covering step can interfere and the $\pi$-covering described in \cref{cab} is a $\rho$-covering, as required. 
Finally, since  $\rho$ is antisymmetric, we have that for every $x\in L$,
\begin{equation}
(x,1)\not\leq_\rho (x,0).
\label{eqmkZsprmsJlKrNqg}
\end{equation}
Now if  \cref{cta} holds for $(a,i)$ and $(b,j)$, then $a\pT b$ yields that  $a\prec b$ while the definition of $\tau$ and $\rho$ implies that $(a,i)=(a,1) <_\rho (b,0)=(b,j)$. 
Take an arbitrary  $(c,k)\in \aL$ such that 
$(a,1)\leq_\rho(c,k)\leq_\rho (b,0)$. Since 
$(a,1)\leq_\pi(c,k)\leq_\pi (b,0)$ by  \eqref{eqbzKnknSrVd} and  $a\prec b$, we have that $c\in\set{a,b}$. If $c=a$, then we can apply 
\eqref{eqmkZsprmsJlKrNqg} with $x:=a$ to exclude that $k=0$, whence $(c,k)=(a,1)$. Similarly, if $c=b$, then \eqref{eqmkZsprmsJlKrNqg} with $x:=b$ excludes that $k=1$, whereby 
$(c,k)=(b,0)$. Thus $(c,k)$ is necessarily $(a,1)$ or $(b,0)$, and 
it follows that $(a,i)=(a,1)\prho (b,0)=(b,j)$. Hence, \cref{cta} describes a $\rho$-covering, as required. We have seen that the disjunction of \cref{caa}, \cref{cab}, and  \cref{cta} describes $\prho$ correctly.

Next, for later use, we observe the following.
For  covering pairs $e_1\prec f_1$ and $e_2\prec f_2$ in $L$, we say that they are \emph{transposed} if  
$e_2\vee f_1=f_2$ and $e_2\wedge f_1=e_1$, or $e_1\vee f_2=f_1$ and $e_1\wedge f_2=e_2$. Since $T$ is compatible, 
\begin{equation}\left.
\parbox{6.8cm}{if $e_1\prec f_1$ and $e_2\prec f_2$ are transposed edges in $L$, then $e_1\pT f_1\iff e_2\pT e_f$;}\,\,\right\}
\label{pbxTrsPsRsltTrzs}
\end{equation}
referencing this property is how we can exploit the compatibility of $T$.

Next, \eqref{pbxBEZ} formulates the \emph{BEZ Lemma}, named after the initials of its inventors,  Bj\"orner, Edelman, and Ziegler~\cite[Lemma 2.1]{bjorneratall}. As a preparation to it, recall that a poset is \emph{bounded} if it has a (necessarily unique) least element $0$ and a (necessarily unique) greatest element $1$, and a poset is of \emph{finite length} is there is a finite upper bound on the lengths of its chains. 
\begin{equation}\left.
\parbox{8.2cm}{If $P$ is a bounded poset of finite length such that for any $x$ and $y$ in $P$ with a common lower cover the join $x\vee y$ exists, then $P$ is a lattice.}\,\,\right\}
\label{pbxBEZ}
\end{equation}
Since $\pi\subseteq \rho$, it is clear that $\dbl LT$ is a bounded poset with bottom element $(0,0)$ and  top element $(1,1)$. Let 
\begin{equation}
\dots (a_{-1},k_{-1})\lrho (a_{0},k_{0})\lrho (a_{1},k_{1})\lrho\dots 
\label{eqmfHbnzTsq}
\end{equation}
be an arbitrary chain in $\dbl LT$. Since $L$ is of finite length, say, of length $n$, it follows from \eqref{eqbzKnknSrVd} that the set $\set{\dots,a_{-1},a_0,a_1,\dots}$ consists of at most $n+1$ elements. The set $\set{\dots,k_{-1},k_0,k_1,\dots}$ has at most two elements since it is a subset of $\cht$. Hence, the chain in \eqref{eqmfHbnzTsq} consists of at most $2(n+1)$ elements, and we obtain that $\dbl LT$ is of finite length. 
Hence, the BEZ Lemma is applicable. In fact, for later use, we are going to prove a bit more than required by \eqref{pbxBEZ}. By a \emph{covering square} in a poset we mean a quadruple $(o,a,b,i)$ of four distinct elements such that $o\prec a$, $o\prec b$, $a\prec i$, and $b\prec i$.   We claim that 
\begin{equation}\left.
\parbox{7.4cm}{If $x,y,z\in\aL$ such that $x\neq y$, $z\prho x$, and $z\prho y$, then the join $x\vee y$ exists in $\dbl LT$ and $(z,x,y,x\vee y)$ is a covering square in $\dbl LT$.}\,\,\right\}
\label{pbxZjhmKmsTn}
\end{equation}
In order to prove \eqref{pbxZjhmKmsTn}, we have to deal with several cases depending on the position of $z$ and the covering types \cref{caa},\dots, \cref{cta} that occur.

\begin{case}\label{case1} We assume that $z$ is of the form $(c,0)$ and both of $z\prho x$ and $z\prho y$ are \cref{cab}-coverings. Then $x=(a,0)$, $y=(b,0)$, $c\prec a$, $c\prec b$, $c\npT a$, and $c\npT b$. Letting $u:=(a\vee b,0)$, we claim that $u=x\vee y$ and $(z,x,y,u)$ is a covering square in $\dbl LT$. By the modularity of $L$, $a\prec a\vee b$ and $b\prec a\vee b$ in $L$. So, $(c,a,b,a\vee b)$ is a covering square in $L$. By \eqref{pbxTrsPsRsltTrzs}, $a\npT a\vee b$ and 
$b\npT a\vee b$, whereby $x=(a,0)\prec (a\vee b,0)$ and  
$y=(b,0)\prec (a\vee b,0)$ are \cref{cab}-coverings, so we have a covering square in $\dbl LT$. Clearly, $(a\vee b,0)$ is an upper bound of $x=(a,0)$ and $y=(b,0)$, and it follows easily from \eqref{eqbzKnknSrVd} that is the least upper bound. Hence, $x\vee y=(a\vee b,0)$, so the required join exists. This completes Case~\ref{case1}. 
\end{case}

Before the next case, we prove the following auxiliary statement.
\begin{equation}\left.
\parbox{7cm}{If $u\prec v$, $u\npT v$, and $v\leq w$ in $L$, then $(u,1)\rrho (w,0)$ implies that $(v,1)\rrho (w,0)$.
}\right\}
\label{pbxmcshKjqsJ}
\end{equation}
In order to show this, take a shortest repetition-free $(\pi\cup\tau)$-sequence from $(u,1)$ to $(w,0)$. As in \eqref{eqprsrzT}, let $(x_i,k_i)$, $i=0,1,\dots,n$, be the members of this sequence. Since $(x_0,k_0)=(u,1)$ is an upper element but $(x_n,k_n)=(w,0)$ is not, there is a unique integer $t\in\set{0,1,\dots, n-1}$ such that the $(x_i,k_i)$ are
upper elements for $i=0,1,\dots, t$ but $(x_{t+1},k_{t+1})$ is a lower element. That is, $k_0=k_1=\dots k_t=1$ but $k_{t+1}=0$. 
If there is an $i\in\set{0,\dots,t}$ such that $v\leq x_i$, then   $(v,1)\lqrho (x_i,1)$ by $\pi\subseteq \rho$, $(x_i,1)\lqrho (w,0)$ by the second half of the sequence, and so 
the transitivity of $\rho$ yields that $(v,1)\rrho (w,0)$, as required. 
Hence, we can assume that $v\not\leq x_i$ for $i=0,1,\dots, t$.  
The transition from $(x_t,k_t)=(x_t,1)$ to $(x_{t+1},k_{t+1})=
(x_{t+1},0)$ in the sequence is not a $\pi$-step, whence it is a $\tau$-step. This implies that $x_t\pT x_{t+1}$ and, in particular, $x_t\prec x_{t+1}$. Now there are two cases depending on whether $v\leq x_{t+1}$ or not.

First, let $v\leq x_{t+1}$. Since $v\not\leq x_t$ and $x_t\prec x_{t+1}$, it follows from $x_t<x_t\vee v\leq x_{t+1}$ that $x_t\vee v=x_{t+1}$. Similarly, $u\leq x_t$ by  \eqref{eqbzKnknSrVd}, and so  $u\leq x_t\wedge v<v$ and $u\prec v$ yield that $x_t\wedge v=u$. That is, $x_t\prec x_{t+1}$ and $u\prec v$ are transposed edges. But this contradicts \eqref{pbxTrsPsRsltTrzs} since $x_t\pT x_{t+1}$ but $u\npT v$. 

Second, assume that $v\not\leq x_{t+1}$. Since $u\leq x_{t+1}$ by  \eqref{eqbzKnknSrVd}, $u\leq  x_{t+1}\wedge v < v$ and $u\prec v$ give that $x_{t+1}\wedge v=u$. 
By the Isomorphism Theorem for Modular Lattices, see, for example, Gr\"atzer\cite[Theorem 348]{ggfoundbook}, the maps
\begin{equation}\left.
\begin{aligned}
\phi&\colon[u=x_{t+1}\wedge v,x_{t+1}]\to [v,x_{t+1}\vee v],\,\,\text{ defined by }\,\,  s\mapsto s\vee v,\,\,\text{ and}
\cr
\psi&\colon [v,x_{t+1}\vee v] \to [u,x_{t+1}],\,\,\text{  defined by }\,\, s\mapsto s\wedge x_{t+1}
\end{aligned}
\!\!\!\!\right\}
\label{alignMdhSznlt}
\end{equation}
are reciprocal lattice isomorphisms. Hence, using that $x_t\prec x_{t+1}$, we have that
$x_t\vee v=\phi(x_t)\prec \phi(x_{t+1})= x_{t+1}\vee v$. Furthermore, $x_{t+1}\vee x_{t}\vee v = x_{t+1}\vee v$ and 
$x_t= \psi(\phi(x_t))= (x_t\vee v)\wedge x_{t+1}$, showing that $x_t\prec x_{t+1}$ and $x_t\vee v\prec  x_{t+1}\vee v$ are transposed edges. 
This fact, $x_t\pT x_{t+1}$, and \eqref{pbxTrsPsRsltTrzs} give that $x_t\vee v\pT x_{t+1}\vee v$. Hence, $(x_t\vee v,1) \rtau (x_{t+1}\vee v,0)$, which gives that $(x_t\vee v,1) \lqrho (x_{t+1}\vee v,0)$. Since $\pi\subseteq \rho$, we have that $(v,1)\lqrho (x_t\vee v,1)$.
Since $(x_{t+1},0)$ occurs in our sequence, $x_{t+1}\leq w$ by   \eqref{eqbzKnknSrVd}. Hence, using the assumption $v\leq w$, we have that $x_{t+1}\vee v\leq w$ and we conclude from $\pi\subseteq \rho$ that $(x_{t+1}\vee v,0)\lqrho (w,0)$. The last three inequalities with $\lqrho$ yield that $(v,1)\lqrho (w,0)$, in other words, $(v,1)\rrho (w,0)$, proving \eqref{pbxmcshKjqsJ}. 

Note that although the use of modularity could have been avoided at several places in our considerations, it seems to be important at \eqref{alignMdhSznlt}.

\begin{case}\label{case2} We assume that $z=(c,0)$ is a lower element and at least one of $z\prho x$ and $z\prho y$ is a  \cref{caa}-covering. The other covering is necessarily a \cref{cab}-covering, so we can assume that $x=(c,1)$ and $y=(b,0)$ with $c\prec b$ and $c\npT b$. We claim that $(b,1)$ is the join of $x$ and $y$; it is clearly an upper bound. Let $(d,k)$ be an arbitrary upper bound of $x=(c,1)$ and $y=(b,0)$. By \eqref{eqbzKnknSrVd}, $b\leq d$. 
If $k=1$, then $(b,1)\lqrho (d,1)=(d,k)$ by $\pi\subseteq \rho$, as required. Thus, we assume that $k=0$. Letting $(c,b,d)$ play the role of $(u,v,w)$, we obtain from \eqref{pbxmcshKjqsJ} that $(b,1)\lqrho (d,0)=(d,k)$, showing that $(b,1)$ is the join of $x=(c,1)$ and $y=(b,0)$ in $\dbl LT$. Since $c\npT b$ and $c\prec b$, we have that $x=(c,1)\prho (b,1)$ is a \cref{cab}-covering. Since $y=(b,0)\prho (b,1)$ is a \cref{caa}-covering, $(z,x,y,x\vee y)$ is a covering square in $\dbl LT$, completing Case~\ref{case2}.
\end{case}

\begin{case}\label{case3} We assume that $z=(c,1)$ is an upper element and at least one of the coverings $z\prho x$ and $z\prho y$ is a \cref{cta}-covering. Let $z\prho x$ such a covering, that is, 
$x=(a,0)$ such that $c\pT a$ (and so $c\prec a$). We obtain from \eqref{eqtxtblTBlnl} and $y\neq x$ that $z\prho y$ cannot be a \hbox{\cref{cta}}-covering, whereby it is a \cref{cab} covering, that is, 
$y=(b,1)$ with $c\prec b$ and $c\npT b$. By the (upper semi-)modularity of $L$, $(c,a,b,a\vee b)$ is a covering square in $L$. This fact and \eqref{pbxTrsPsRsltTrzs} give that $b\pT a\vee b$ but $a\npT a\vee b$. Thus, $y=(b,1)\prho (a\vee b,0)$ is a \cref{cta}-covering and  $x=(a,0)\prho (a\vee b,0)$ is a \cref{cab}-covering. We have seen that $(z,x,y,(a\vee b,0))$ is a covering square in $\dbl LT$. In particular, $(a\vee b,0)$ is an upper bound of $x=(a,0)$ and $y=(b,1)$. Let $(d,k)$ be another upper bound. Since $a\vee b\leq d$ by  \eqref{eqbzKnknSrVd}, we obtain from $\pi\subseteq \rho$ that $(a\vee b,0)\leq (d,k)$, as required. This completes Case~\ref{case3}.
\end{case}

\begin{case}\label{case4} We assume that $z=(c,1)$ is an upper element and none of the coverings $z\prho x$ and $z\prho y$ is a \cref{cta}-covering. Then both are  \cref{cab}-coverings, so 
$x=(a,1)$, $y=(b,1)$, $c\prec a$, $c\prec b$, $c\npT a$, and $c\npT b$. By the modularity of $L$, $(c,a,b,a\vee b)$ is a covering square, and it follows from \eqref{pbxTrsPsRsltTrzs} that $a\npT a\vee b$ and $b\npT a\vee b$. Hence, $(z,x,y,(a\vee b,1))$ is a covering square in $\dbl LT$ with all of its edges being \cref{cab}-coverings. This square shows that  $(a\vee b,1)$ is an upper bound of $x$ and $y$. Let $(d,k)$ be an arbitrary upper bound of $x=(a,1)$ and $y=(b,1)$ in $\dbl LT$. From \eqref{eqbzKnknSrVd}, it follows that $a\vee b\leq d$. Observe that $\pi\subseteq \rho$ gives that $(a\vee b,1)\lqrho (d,1)$. So we can assume that $k=0$ since otherwise $(a\vee b,1)\lqrho (d,k)$ has already been shown.   
Since  $(a,1)\lqrho (d,0)$, $a\prec a\vee b$, $a\npT a\vee b$, and $a\vee b\leq d$, we can apply \eqref{pbxmcshKjqsJ} with $(a,a\vee b, d)$ playing the role of $(u,v,w)$ to conclude that $(a\vee b,1)\lqrho (d,0)=(d,k)$. 
This completes Case~\ref{case4}. 
\end{case}

Cases~\ref{case1}--\ref{case4} prove the validity of \eqref{pbxZjhmKmsTn}, and so $\dbl LT$ is a lattice by \eqref{pbxBEZ}.  The following statement will be used to prove that $\dbl LT$ is modular. We claim that 
\begin{equation}\left.
\parbox{9cm}{If $x,y,z\in\aL$ such that $x\neq y$, $x\prho z$, and $y\prho z$, then  $(x\wedge y,x,y,z)$ is a covering square in $\dbl LT$.}\,\,\right\}
\label{pbxgjhswTszhtD}
\end{equation}
Now that we already know that $\dbl LT$ is a lattice, the proof of \eqref{pbxgjhswTszhtD} is easier than that of \eqref{pbxZjhmKmsTn}. Indeed, it suffices to show that $x$ and $y$ from \eqref{pbxgjhswTszhtD} have a common lower cover. However, we have to deal with several cases again. First, assume that at least one $x$ and $y$, let it be $x$, is \cref{cta}-covered by $z$. Hence $z=(c,0)$, $x=(a,1)$, and $a\pT c$ (and so  $a\prec c$). By \eqref{eqtxtblTBlnl}, $x$ is the only lower \cref{cta}-cover of $x$, whereby $y\prho z$ is a \cref{cab}-covering, $y$ is of the form $y=(b,0)$ with $b\prec c$ and $b\npT c$. By the modularity of $L$, $(a\wedge b, a,b,c)$ is a covering square in $L$. Hence, using \eqref{pbxTrsPsRsltTrzs}, we obtain the $a\wedge b\pT b$ and $a\wedge b\npT a$. Therefore, 
$(a\wedge b,1)\prho (b,0)=y$ is a \cref{cta}-covering while 
$(a\wedge b,1)\prho (a,1)=x$ is a  \cref{cab}-covering, showing that $x$ and $y$ have a common lower cover, as required. Thus, in the rest of the cases, we can disregard the situation when $x\prho z$ or $y\prho z$ is a \cref{cta}-covering. 

Second, assume that  both $x\prec z$ and $y\prec z$ are \cref{cab}-coverings. Then $z=(c,i)$, $x=(a,i)$, $y=(b,i)$, $a\prec c$, $b\prec c$, $a\npT c$, and $b\npT c$. The modularity of $L$ yields that $(a\wedge b, a,b,c)$ is a covering square in $L$. Hence, $a\wedge b\npT a$ and 
$a\wedge b\npT b$ by \eqref{pbxTrsPsRsltTrzs}, and it follows that $(a\wedge b,i)$ is \cref{cab}-covered both by $x=(a,i)$ and $y=(b,i)$.  So $x$ and $y$ has a common lower cover in this case.

Third, since no element can have two distinct lower \cref{caa}-covers, there remains only one case: 
one of $x\prho z$ and $y\prho z$ is a  \cref{caa}-covering and the other one is a \cref{cab}-covering.
Hence, we can assume that $z=(c,1)$, $x=(a,1)$ with $a\prec c$ but $a\npT c$, and $y=(c,0)$. Clearly, $(a,0)\prho (a,1)=x$ is a \cref{caa}-covering while $(a,0)\prho (c,0)=y$ is a \cref{cab}-covering, showing that $x$ and $y$ have a common lower cover again. This proves \eqref{pbxgjhswTszhtD}.

\begin{figure}[htb] 
\centerline
{\includegraphics[scale=1.0]{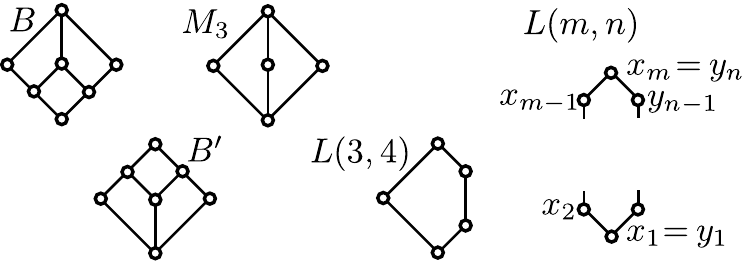}}
\caption{The lattices in Jakub\'\i{}k's theorem; see \eqref{pbxJakubik}
\label{figurc}}
\end{figure}%

If $L_1$ is a sublattice of another lattice $L_2$ such that every covering pair $a\prec_{L_1} b$ is also a covering pair in $L_2$, then $L_1$ is a \emph{cover-preserving sublattice} of $L_2$. However, instead of saying that $M_3$ is  cover-preserving sublattice of $L$, we usually say shortly that \emph{$L$ has a covering $M_3$}.  
The lattices $B$, $B'$, $M_3$, and $L(m,n)$ (for $m\geq 3$ and $n\geq 4$) are given in Figure~\ref{figurc}. It is proved in Jakub\'{\i}k  \cite[Theorems 1 and 2]{jakubik} that for a lattice $L$ of finite length (in particular, for a finite lattice $L$),
\begin{equation}\left.
\parbox{8.2cm}{$L$ is modular if and only if none of the lattices $B$, $B'$, and $L(m,n)$ ($m\geq 3$, $n\geq 4$) is a cover-preserving sublattice of $L$. Furthermore, $L$ is distributive if and only if it is modular and it has no covering $M_3$.}
\,\,\right\}
\label{pbxJakubik}
\end{equation}
Observe that each of the $L(m,n)$  ($m\geq 3$, $n\geq 4$) and $B$ has three elements, $x,y,z$, such that $x\prec z$, $y\prec z$, $x\neq y$, but $(x\wedge y, x, y, z)$ is not a covering square. It follows from \eqref{pbxgjhswTszhtD} that these lattices cannot be cover-preserving sublattices of $\dbl LT$. 
Similarly, $B'$, the dual of $B$, has elements $x,y,z$ such that $z\prec x$, $z\prec y$, $x\neq y$, but $(z,x,y,x\vee y)$ is not a covering square in $B'$. Hence, $B'$ is not a cover-preserving sublattice of $\dbl LT$ by \eqref{pbxZjhmKmsTn}. Therefore, \eqref{pbxJakubik} yields that $\dbl LT$ is modular.

Before dealing with the distributive case, the following auxiliary statement is worth proving.
\begin{equation}\left.
\parbox{6.7cm}{If $(d_1,i)\prho (d_2,i)$ and $(d_1,i)\prho (d_3,i)$ are \cref{cab}-coverings, then $(d_2,i)\vee (d_3,i)=(d_2\vee d_3,i)$ holds in $\dbl LT$.}
\,\,\right\}
\label{pbxZhBfrC}
\end{equation}
Indeed, if the premise of \eqref{pbxZhBfrC} holds, then 
$d_1\prec d_2$ and $d_1\prec d_3$, so  modularity yields that  $(d_1, d_2, d_3, d_2\vee d_3)$ is a covering square in $L$. Since $d_1\npT d_2$ and $d_1\npT d_3$, we obtain from \eqref{pbxTrsPsRsltTrzs} that $d_2\npT d_2\vee d_3$ and $d_3\npT d_2\vee d_3$. Hence, $(d_2,i)\prho (d_2\vee d_3,i)$ and  
$(d_3,i)\prho (d_2\vee d_3,i)$, since they are \cref{cab}-coverings. We are in a lattice, so these two coverings yield the validity of \eqref{pbxZhBfrC}. 

Next, assume that $L$ is distributive;  we already know that $\dbl LT$ is modular. For the sake of contradiction, suppose that $\dbl LT$ is not distributive. By \eqref{pbxJakubik}, $\dbl LT$ has a covering $M_3=\set{u,x,y,z,v}$, where $u\prho x\prho v$, $u\prho y\prho v$, and $u\prho z\prho v$. There are three cases to consider. 

First, assume that each of $u\prho x$, $u\prho y$, and $u\prho z$ is a \cref{cab}-covering. Then we can write that $u=(e,i)$, 
$x=(a,i)$, $y=(b,i)$, and $z=(c,i)$, and it follows from \eqref{pbxZhBfrC} that each of $(a\vee b,i)$, $(a\vee c,i)$ and $(b\vee c,i)$ equals $v$. Hence, $a\vee b=a\vee c=b\vee c$, yielding that  $(e,a,b,c,a\vee b)$ is a (covering) $M_3$ in $L$, contradicting the distributivity of $L$. 

Second, assume that  
least one of  $u\prho x$, $u\prho y$, and $u\prho z$ is a \cref{caa}-covering. Then $u$ is a lower element of the form $u=(e,0)$, whence none of $u\prho x$, $u\prho y$, and $u\prho z$ is a \cref{cta}-covering. Since  at most one of these three coverings can be a  \cref{caa}-covering, we can assume that $u=(e,0)\prho (e,1)=x$ is a \cref{caa}-covering while $u\prho (b,0)=y$ and $u\prho (c,0)=z$ are  \cref{cab}-coverings.   By \eqref{pbxZhBfrC}, $v=y\vee z=(b\vee c,0)$. Hence, $(e,1)=x\prho v=(b\vee c,0)$ is a covering, because we are in a covering $M_3$. The only way that a lower element can cover an upper one is a \cref{cta}-covering. Hence,  $(e,1)\prho (b\vee c,0)$ is a \cref{cta}-covering, $e\pT b\vee c$ and, in particular, $e\prec b\vee c$. But this is a contradiction, because the \cref{cab}-coverings $(e,0)=u \prho  y=(b,0)$ and 
$(b,0)=y\prho v= (b\vee c,0)$ give that  $e < b < b\vee c$.

Third, assume that  least one of  
$u\prho x$, $u\prho y$, and $u\prho z$ is a \cref{cta}-covering. Then $u$ is an upper element of the form $u=(e,1)$; let $u\prho x$ a \cref{cta}-covering. Then $x$ is of the form $x=(a,0)$ with $e\pT a$ and, in particular, $e\prec a$.
It follows from \eqref{eqtxtblTBlnl} that none of $u\prho y$, and $u\prho z$ is a \cref{cta}-covering, and they are not \cref{caa}-coverings because $u$ is an upper element. So 
 $u\prho y$ and $u\prho z$ are  \cref{cab}-coverings and we can write that $y=(b,1)$ and $z=(c,1)$ with $b\neq c$.
From \eqref{pbxZhBfrC}, we obtain that $v=y\vee z=(b\vee c,1)$.
Hence, $(a,0)=x\prho v=(b\vee c,1)$. This is neither a \cref{cab}-covering, nor a \cref{cta}-covering, because $x$ is a lower element and $v$ is an upper one. Thus, $(a,0)\prho(b\vee c,1)$ is a \cref{caa}-covering and so $a=b\vee c$. Hence, $e<b<b\vee c=a$ in $L$, contradicting $e\prec a$. 

Now that all the three cases have led to contradiction, we have shown that the modular lattice $\dbl LT$ has no covering $M_3$, and it follows from \eqref{pbxJakubik} that this lattice  is distributive. 
The proof of Theorem~\ref{thmdoubling} is complete.
\end{proof}

\section{An application of doubling tolerances to coalition lattices}\label{sectionfincoal}

For a finite poset $\alg P=\poset P$, the set of all subsets of $P$ will be denoted by $\Pow P$ or $\Pow{\alg P}$. Note at this point that the relations $<$ and $\leq$ in a poset $\alg P$ mutually determine each other; this allows us to use both of them even if only one is given originally. 

\begin{definition}[Cz\'edli and Poll\'ak~\cite{czgpgy}]
Let $\alg P=\poset P$  be a finite poset. For $X,Y\in \PowP$, a map (function) $\phi\colon X\to Y$ is  \emph{extensive} if $\phi$ is injective and $x\leq \phi(x)$ holds for every $x\in X$. Let 
$X\leq Y$ mean that there exists an extensive map $X\to Y$.  With this meaning of ``$\leq$'', the poset $\ieCoalP$  is the \emph{coalition poset} of $\alg P$, and its elements are called the \emph{coalitions} of $\alg P$.
\end{definition}

When we consider a subset $X$ of $P$ as a member of $\CoalP$, then we call it a coalition of $\alg P$ rather than a subset. 
Note that since 1995, when \cite{czgpgy} was published, the terms ``coalition'' and  ``coalition lattice'' have also been used with different meanings in mathematics and informatics; see, e.g., \cite{meijedeatal}, \cite{nakanoatall}, and \cite{sandholmatal}. 

The Hasse diagram of our finite poset  $\alg P=\poset P$ is also a graph; the (connectivity) components of this graph are the \emph{components} of $\alg P$. These components are also posets with the orderings restricted from  $\alg P$ to them.
We say that $\alg P$ is \emph{upper bound free} if no two incomparable elements has an upper bound in $\alg P$. \emph{Lower bound free} posets are defined dually. Note that $\alg P$ is both upper bound free and lower bound free if and only if all of its components are chains. In \cite{czgpgy}, we proved that

\begin{proposition}[Cz\'edli and Poll\'ak~\cite{czgpgy}]\label{propczgpgy} Let $\alg P=\poset P$ be a finite poset.
\begin{enumeratei}
\item\label{propczgpgya} $\CoalP$ is a lattice if and only if $\alg P$ is upper bound free.
\item\label{propczgpgyb} If $\alg P_1$, \dots , $\alg P_n$ is the list of the components of $\alg P$, then the lattice $\CoalP$ is (isomorphic to) the direct product $\prod_{i=1}^k{\iCoalP i}$ 
\item\label{propczgpgyc} If $\CoalP$ is a lattice and $\alg P$ is lower bound free, then $\CoalP$ is distributive.
\item\label{propczgpgyd} If  $\CoalP$ is a distributive lattice, then $\alg P$ is lower bound free.
\end{enumeratei}
\end{proposition}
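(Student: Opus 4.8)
The plan is to reduce everything to one combinatorial description of the order $\le$ on $\PowP$ and then to dispatch the four parts in the order (ii), a key lemma, (i), (iii), and finally (iv).

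\emph{Preliminaries and part (ii).} First I would check that $\le$ is a partial order on $\PowP$: reflexivity and transitivity are immediate from identity maps and composition of extensive maps, and antisymmetry follows because an injective order-preserving self-map $\sigma$ of a finite poset with $x\le\sigma(x)$ for all $x$ is the identity (iterate $\sigma$ along its finite cycles). Since $\emptyset$ and $P$ are the bottom and top of $\CoalP$, it is a bounded poset of finite length, so the BEZ Lemma \eqref{pbxBEZ} is available. For part (ii), write $X=\bigcup_i X_i$ with $X_i=X\cap P_i$, where $P_i$ is the underlying set of the $i$-th component; because $x\le y$ in $\alg P$ forces $x$ and $y$ into the same component, every extensive map $X\to Y$ is the disjoint union of extensive maps $X_i\to Y_i$, and conversely such disjoint unions are extensive. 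Hence $X\mapsto(X_1,\dots,X_n)$ is an order isomorphism $\CoalP\cong\prod_i\iCoalP{i}$; this reduces parts (i) and (iii), when convenient, to connected posets, and reduces the distributive case to chains.

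\emph{The key tool.} For an upper bound free $\alg P$ I would prove: with $f_X(p):=|X\cap\filter p|$, one has $X\le Y$ if and only if $f_X\le f_Y$ pointwise. The forward direction holds in any poset, since an extensive map restricts to an injection $X\cap\filter p\to Y\cap\filter p$. For the converse, by Hall's marriage theorem an extensive injection $X\to Y$ exists iff $|Y\cap\bigcup_{x\in S}\filter x|\ge|S|$ for every $S\subseteq X$; as the filters of two incomparable elements of an upper bound free poset are disjoint, one may restrict to $S=X\cap\filter p$, and there Hall's condition is exactly $f_X(p)\le f_Y(p)$. Since the upper covers of any $p$ are pairwise incomparable, $f_X(p)=[p\in X]+\sum_{q\succ p}f_X(q)$, so the image of $X\mapsto f_X$ is precisely the set of maps $f\colon P\to\Nnul$ with $f(p)-\sum_{q\succ p}f(q)\in\{0,1\}$ for all $p$. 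I expect this reduction of Hall's condition to the sets $X\cap\filter p$ — the one place where upper bound freeness is used — to be the crux of the whole argument.

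\emph{Parts (i) and (iii).} Given upper bound free $\alg P$ and coalitions $X,Y$, I would define $g\colon P\to\Nnul$ from the maximal elements downward by $g(p):=\min\bigl(f_X(p),\,f_Y(p),\,1+\sum_{q\succ p}g(q)\bigr)$, check that $g$ lies in the image described above and is the pointwise largest member of that image below both $f_X$ and $f_Y$, and conclude that the coalition $W$ with $f_W=g$ is $X\wedge Y$; hence $\CoalP$ is a meet-semilattice with a top, so a lattice. Conversely, if incomparable $a,b$ have a common upper bound, choose a minimal common upper bound $c$ of $a,b$; then $\{c\}$ and $\{a,b\}$ are two distinct incomparable minimal common upper bounds of $\{a\}$ and $\{b\}$, so $\{a\}\vee\{b\}$ does not exist. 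This proves (i). For (iii): if $\CoalP$ is a lattice and $\alg P$ is lower bound free, then (i) forces $\alg P$ upper bound free as well, so every component is a chain; by (ii) I may take $\alg P$ to be a finite chain $p_1<\dots<p_m$, where $f_X$ runs over exactly the non-increasing sequences in $\Nnul^m$ with all consecutive gaps at most $1$. That set is closed under coordinatewise $\min$ and $\max$, hence is a sublattice of the distributive lattice $(\Nnul^m;\le)$, and $X\mapsto f_X$ identifies $\CoalP$ with it; therefore $\CoalP$ is distributive.

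\emph{Part (iv) and the main obstacle.} Arguing contrapositively, suppose $\alg P$ is not lower bound free. If it is not upper bound free either, then $\CoalP$ is not a lattice by (i), hence not distributive. Otherwise, fix an incomparable pair having a common lower bound, let $c$ be a maximal common lower bound of it, and pass to covers to get $c\prec a'$, $c\prec b'$ with $a'\parallel b'$ (maximality of $c$ excludes $a',b'$ being comparable). Using upper bound freeness — so $a',b'$ have no common upper bound — one gets $\{a'\}\vee\{b'\}=\{a',b'\}$, and, using that $\CoalP$ is already known to be a lattice, that every common lower bound of $a',b'$ lies below $c$, whence $\{a'\}\wedge\{b'\}=\{c\}$. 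A short computation then shows that $\{c\}$, $\{a'\}$, $\{a',c\}$, $\{b'\}$, $\{a',b'\}$ form a sublattice of $\CoalP$ isomorphic to the pentagon $N_5$, so $\CoalP$ is not distributive. The routine steps are the partial-order check, part (ii), and the bookkeeping around $g$; the main obstacle I anticipate is setting up and exploiting the $f_X$-description — concretely, the Hall-condition reduction in the key tool, and checking in (iv) that the five coalitions really form a sublattice and not merely a subposet.
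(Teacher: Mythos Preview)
The paper does not prove this proposition; it quotes it from Cz\'edli--Poll\'ak~\cite{czgpgy}. What the present paper contributes is a \emph{new} proof of the chain case of part~(iii) (Corollary~\ref{corHnRjsB}) via the doubling-tolerance machinery: by induction on $|P|$, one shows that $T$ from \eqref{eqTdFlsR} is a doubling congruence on $\pCoalP$ and that $\CoalP\cong\dbl{\pCoalP}{T}$ (Lemma~\ref{lemmazBfmWvVjGtRx}); distributivity then descends from Theorem~\ref{thmdoubling}. Part~(ii), which both you and the paper use to reduce~(iii) to chains, is described in the paper as ``more or less trivial''.

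Your argument is correct and takes a genuinely different route for the chain case. Instead of the inductive doubling construction, you encode a coalition $X$ of a chain $p_1<\dots<p_m$ by its filter-count vector $f_X\in\Nnul^m$, identify the image as the set of non-increasing sequences with consecutive gaps in $\{0,1\}$, observe that this set is closed under coordinatewise $\min$ and $\max$, and conclude that $\CoalP$ embeds as a sublattice of the distributive lattice $\Nnul^m$. This is more elementary and yields the lattice operations explicitly, whereas the paper's approach is designed to exhibit $\CoalP$ as a natural instance of the $\dbl{\,\cdot\,}{T}$ construction and thereby motivate Theorem~\ref{thmdoubling}. Your Hall-theorem characterization of $\leq$ in the upper bound free case (the ``key tool'') also gives you parts~(i) and~(iv) essentially for free; the paper does not attempt these at all. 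One small point worth tightening in your write-up of~(iv): the step ``every common lower bound of $a',b'$ lies below $c$'' uses that in an upper bound free poset the set $\ideal a'$ is a chain (two incomparable elements of $\ideal a'$ would have $a'$ as a common upper bound), so the common lower bounds of $a',b'$ are totally ordered and $c$, being maximal, is actually the maximum. With that remark the pentagon verification goes through as you describe.
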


Next, we formulate a particular case of  \ref{propczgpgy}\eqref{propczgpgyc},  which we are going to prove here with the help of doubling tolerances; note that the   
conjunction of this particular case with (the more or less trivial) \ref{propczgpgy}\eqref{propczgpgyb} implies  \ref{propczgpgy}\eqref{propczgpgyc}.

\begin{corollary}\label{corHnRjsB}
If $\alg P=\poset P$ is a finite chain, then $\CoalP$ is a distributive lattice.
\end{corollary}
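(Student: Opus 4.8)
The plan is to induct on $n=|P|$. Since a chain is (trivially) upper bound free, $\CoalP$ is a lattice by Proposition~\ref{propczgpgy}\eqref{propczgpgya}, so only distributivity is at issue. For $n\leq 1$ the lattice $\CoalP$ is $\cht$ or a singleton and there is nothing to prove, so assume $n\geq 2$, write $P=\set{p_1<p_2<\dots<p_n}$, put $q:=p_2$, and let $\alg P'$ be the chain $\set{p_2<\dots<p_n}$ obtained by deleting the bottom $p_1$ of $\alg P$. By the induction hypothesis $\pCoalP$ is a finite distributive (hence modular) lattice, so Theorem~\ref{thmdoubling} is applicable to it. Consider the bijection $\eta\colon\Pow P\to\Pow{\alg P'}\times\cht$ given by $\eta(X)=(X\setminus\set{p_1},1)$ if $p_1\in X$ and $\eta(X)=(X,0)$ otherwise; thus $\eta^{-1}(Z,0)=Z$ and $\eta^{-1}(Z,1)=Z\cup\set{p_1}$ for $Z\in\Pow{\alg P'}$. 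The aim is to put a doubling tolerance $T$ on $\pCoalP$ so that $\eta$ is an order isomorphism $\CoalP\to\dbl{\pCoalP}T$; then, $\pCoalP$ being distributive, Theorem~\ref{thmdoubling} yields that $\dbl{\pCoalP}T\cong\CoalP$ is distributive, closing the induction.

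Define $T$ on $\pCoalP$ by letting $X\mathrel T Y$ mean that $X=Y$ or $\set{X,Y}=\set{Z,Z\cup\set q}$ for some coalition $Z$ of $\alg P'$ with $q\notin Z$; equivalently, $T$ is the kernel of the map $\phi\colon\pCoalP\to\pCoalP$, $\phi(Z)=Z\setminus\set q$. I claim $T$ is a doubling tolerance on the modular lattice $\pCoalP$. First, for any finite chain, adjoining its least element to a coalition not containing it is a covering step in the coalition lattice; hence $Z\prec Z\cup\set q$ in $\pCoalP$ whenever $q\notin Z$, so each $\set{Z,Z\cup\set q}$ is a genuine two-element interval, and these are exactly the blocks of $T$ (three coalitions cannot be pairwise $T$-related, as $a\prec c$ is impossible when $a<b<c$). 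Second, $\phi$ is a lattice endomorphism — so that $T=\ker\phi$ is a congruence, in particular a compatible tolerance — which is cleanest to see after encoding a coalition $\set{p_{i_1}<\dots<p_{i_k}}$ of an $N$-element chain as the weakly increasing vector $(0,\dots,0,i_1,\dots,i_k)$ of length $N$ (with ties occurring only among the leading $0$'s): the coalition lattice then becomes a sublattice of a power of a finite chain, the coalition order becomes the coordinatewise order, and $\phi$ acts coordinatewise by the chain endomorphism that collapses the value encoding $q$ to $0$ and fixes the rest. Thus $T$ is a doubling tolerance, and $\pT$ is precisely the relation $Z\prec Z\cup\set q$ (with $q\notin Z$).

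It remains to verify that $\eta$ is an order isomorphism of $\CoalP$ onto $\dbl{\pCoalP}T$. As both posets are finite, it suffices to match up coverings; those of $\dbl{\pCoalP}T$ are exactly \cref{caa}, \cref{cab}, and \cref{cta} from the proof of Theorem~\ref{thmdoubling}, while in the vector encoding a covering $A\prec B$ of a chain's coalition lattice is precisely an admissible ``increase one coordinate by $1$'' step. One then runs through the four cases according to whether $p_1$ lies in $A$ and in $B$. If $p_1\notin A,B$, then $\eta$ turns $A\prec B$ into a \cref{cab}-covering with second coordinate $0$, and the clause ``$a\npT b$'' of \cref{cab} excludes exactly the bad possibility $B=A\cup\set q$, in which $A\cup\set{p_1}$ would lie strictly between $A$ and $B$. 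If $p_1\in A,B$, write $A=A'\cup\set{p_1}$, $B=B'\cup\set{p_1}$; then $A\prec B$ forces $A'\prec B'$ in $\pCoalP$ together with $B'\neq A'\cup\set q$, i.e.\ a \cref{cab}-covering with second coordinate $1$. If $p_1\notin A$ but $p_1\in B$, then necessarily $B=A\cup\set{p_1}$, which $\eta$ sends to a \cref{caa}-covering. If $p_1\in A$ but $p_1\notin B$, then necessarily $B=(A\setminus\set{p_1})\cup\set q$, so that $A\setminus\set{p_1}\pT B$ and $\eta$ produces a \cref{cta}-covering. Reversing these computations shows that $\eta^{-1}$ also carries coverings to coverings; hence $\eta$ is an isomorphism $\CoalP\cong\dbl{\pCoalP}T$, and Theorem~\ref{thmdoubling} finishes the proof.

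The conceptual skeleton — the induction on $|P|$, the choice of $T$, and the appeal to Theorem~\ref{thmdoubling} — is short; essentially all the effort sits in the two ``computational'' verifications, namely that the ``delete $q$'' map is a lattice homomorphism of $\pCoalP$ (equivalently, that $T$ is a compatible tolerance) and that $\eta$ transports coverings. The four-case covering analysis, and in particular isolating which coverings of $\CoalP$ are the ``$p_1$-dropping'' ones that must correspond to $\pT$, is where I expect the bookkeeping to be most delicate; the vector encoding of coalitions — under which the coalition order is coordinatewise, coverings become unit increments, and $\phi$ becomes a coordinatewise map — is the device that keeps this routine rather than messy.
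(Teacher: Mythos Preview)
Your overall architecture matches the paper's exactly: induction on $|P|$, the same tolerance $T$ (the paper writes it as $\{(A,B): A\cup\{w\}=B\cup\{w\}\}$, which coincides with your kernel-of-``delete $q$''), the same bijection (the paper's $\gamma$ is your $\eta^{-1}$), and the same four-case matching of coverings against \cref{caa}--\cref{cta}. Where you diverge is in the supporting machinery. The paper first proves from scratch a characterization of coverings in $\CoalP$ via the strength function (Lemma~\ref{lemmaczhGrMqsnh}), recasts it as the \pref{paa}--\pref{pta} description (Lemma~\ref{lemmaMsrDkTplnnK}), and verifies that $T$ is a congruence by applying Gr\"atzer's technical lemma (Lemma~\ref{lemmaGGtechn}) to covering squares. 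You replace all of this by a vector encoding under which $\pCoalP$ is asserted to be a \emph{sublattice} of a power of a chain, so that ``delete $q$'' is visibly a coordinatewise lattice endomorphism and coverings are visibly unit increments.

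That sublattice claim is true, but you state it without proof, and it is not innocent. Establishing it means checking that the coordinatewise min and max of two ``zero-padded strictly increasing'' vectors are again of that form; this is short but genuine, and your characterization of coverings as admissible $+1$ steps also rests on it (via modularity $\Rightarrow$ gradedness, or else one needs exactly the argument of Lemma~\ref{lemmaczhGrMqsnh}). More to the point, once that embedding is in hand, $\CoalP$ is already a sublattice of a distributive lattice and hence distributive --- Theorem~\ref{thmdoubling} and the whole doubling-tolerance apparatus become logically idle. The paper deliberately avoids any such shortcut: its covering lemmas nowhere presuppose modularity or distributivity of $\CoalP$, and distributivity is obtained only at the very end via Theorem~\ref{thmdoubling}, precisely because the purpose of the section is to \emph{exhibit an application} of doubling tolerances. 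So your argument is correct, but the device you introduce to verify the hypotheses already delivers the conclusion directly.
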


We call the statement above a corollary because there will be no separate proof of it; this corollary will prompt follow from the following lemma, the main achievement of (the current) Section~\ref{sectionfincoal}. The primary purpose of this lemma is to present an example and an application of  our doubling construction.

\begin{lemma}\label{lemmazBfmWvVjGtRx}
Let $\alg P=\poset P$ be a finite non-singleton chain with 
smallest element $0$, and let $w$ denote its unique atom. Also, let $\alg P'$ be its principal filter $\filter w$; that is, $P'=P\setminus\set 0$. On the lattice $\pCoalP$, we define a relation $T$ as follows: 
\begin{equation}
T:=\set{(A,B)\in \pCoalP^2: A\cup \set w=B\cup\set w }.
\label{eqTdFlsR}
\end{equation}
Then $T$ is a doubling tolerance on $\pCoalP$ and
$\dbl{\pCoalP}T$ is isomorphic to $\CoalP$. Also, $T$ is a congruence and both $\pCoalP$ and $\CoalP$ are distributive lattices.
\end{lemma}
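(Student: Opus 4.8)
The plan is to verify the three claims of the lemma in a natural order: first that $T$ is a doubling tolerance on $\pCoalP$, then that $\dbl{\pCoalP}T\cong\CoalP$, and finally the distributivity of both lattices together with the fact that $T$ is a congruence. The underlying intuition is simple: a coalition of $\alg P'$ is a subset of $P\setminus\set 0$, and it ``lifts'' to two coalitions of $\alg P$, namely $A$ and $A\cup\set 0$, depending on whether $0$ is used; since $w$ is the unique atom of the chain, $0$ can only be mapped to $w$ (or to $0$ itself) by an extensive map, so the two lifts differ by exactly one element. This is precisely what the doubling construction of Section~\ref{sectiontol} formalizes, with the two-element blocks of $T$ being the pairs $\set{A,B}$ of coalitions of $\alg P'$ that coincide after adjoining $w$.

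\textbf{Step 1: $T$ is a doubling tolerance.} First I would check that $T$ is a compatible tolerance on $\pCoalP$: reflexivity and symmetry are immediate from \eqref{eqTdFlsR}, and compatibility with $\vee$ and $\wedge$ follows because $A\cup\set w = B\cup\set w$ and $C\cup\set w = D\cup\set w$ force $(A\vee C)\cup\set w=(B\vee D)\cup\set w$ and dually — here one uses that in $\pCoalP$ the join and meet of coalitions can be described concretely (essentially $\max$/$\min$ operations on multisets of ``heights'' in the chain $\alg P'$) in a way compatible with deleting or adjoining the bottom atom $w$. Then I would show every block has at most two elements: if $\set{A,B,C}\times\set{A,B,C}\subseteq T$ with $A,B,C$ distinct, then all three have the same union with $\set w$, so they can only differ by whether $w$ itself is present; but a coalition either contains $w$ or not, so at most two distinct coalitions can have the same $\cup\set w$. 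Finally I must confirm the ambient lattice is modular (indeed distributive) of finite length; since $\alg P'$ is a finite chain, Corollary~\ref{corHnRjsB}/Proposition~\ref{propczgpgy} applied inductively (or the base case of the induction being trivial) gives that $\pCoalP$ is a finite distributive lattice, so $T$ is a doubling tolerance in the sense of Section~\ref{sectiontol}.

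\textbf{Step 2: the isomorphism $\dbl{\pCoalP}T\cong\CoalP$.} I would define $\Phi\colon \pCoalP\times\cht \to \CoalP$ by $\Phi(A,0)=A$ and $\Phi(A,1)=A\cup\set 0$ (viewing $A\subseteq P'\subseteq P$). This is clearly a bijection: every coalition of $\alg P$ either omits $0$, in which case it is $\Phi(A,0)$ for a unique $A\in\pCoalP$, or contains $0$, in which case it is $\Phi(A,1)$ for $A$ its restriction to $P'$. The crux is to show $\Phi$ is an order-isomorphism from $(\pCoalP\times\cht;\rho)$ onto $\CoalP$, and by the covering description \cref{caa}--\cref{cta} established in the proof of Theorem~\ref{thmdoubling} it suffices to match up coverings. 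The three covering types translate as: \cref{caa} ($(A,0)\prho(A,1)$) goes to $A\prec A\cup\set 0$ in $\CoalP$, which holds because adding $0$ to any coalition not containing it is a covering step (one extra element mapped to itself, no room for anything strictly between since $0$ is the bottom); \cref{cab} ($(A,i)\prho(B,i)$ with $A\prec B$ in $\pCoalP$, $[A,B]$ not a $T$-block) goes to $A\prec B$ or $A\cup\set 0\prec B\cup\set 0$; and \cref{cta} ($(A,1)\prho(B,0)$ with $A\pT B$) goes to $A\cup\set 0\prec B$ in $\CoalP$ — here the $T$-block condition $A\cup\set w=B\cup\set w$ together with $A\prec B$ says $B=A\cup\set w$ and $w\notin A$, and indeed $A\cup\set 0 \prec A\cup\set w$ in $\CoalP$: these two coalitions have the same cardinality and the ``heights'' differ only in moving one element from $0$ to $w$, which is a single covering step. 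I would verify, conversely, that every covering in $\CoalP$ arises this way. The main obstacle I anticipate is precisely this covering-by-covering bookkeeping: one needs a clean combinatorial description of the covering relation in coalition lattices of chains (monotone sequences of heights, compared coordinatewise after sorting), and then a careful case analysis showing that the image under $\Phi^{-1}$ of each such covering is exactly one of \cref{caa}, \cref{cab}, \cref{cta}. This is routine but is where the real work lies.

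\textbf{Step 3: distributivity and $T$ being a congruence.} Once Step~1 establishes that $\pCoalP$ is distributive and that $T$ is a doubling tolerance on it, Theorem~\ref{thmdoubling} immediately yields that $\dbl{\pCoalP}T$ is distributive, hence so is $\CoalP$ via the isomorphism of Step~2. That $T$ is in fact a congruence (not merely a tolerance) follows because its blocks $\set{A,A\cup\set w}$ (for $w\notin A$) together with singletons $\set{A}$ (for $w\in A$) partition $\pCoalP$: transitivity of $T$ is visible from \eqref{eqTdFlsR}, as $A\cup\set w=B\cup\set w$ is plainly an equivalence relation. Alternatively, $T$ is the kernel of the lattice homomorphism $A\mapsto A\cup\set w$ from $\pCoalP$ onto $\whCoalP$ (the coalition lattice of the chain $\filter w$ with its own bottom, i.e.\ of $P'$ relative to $w$), which also re-proves compatibility. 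I would close by noting that Corollary~\ref{corHnRjsB} now follows by induction on $|P|$: the singleton chain is trivial, and the inductive step is exactly this lemma.
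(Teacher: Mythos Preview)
Your overall strategy matches the paper's proof exactly: induction on $|P|$, the same bijection $\Phi$ (called $\gamma$ in the paper), matching the three covering types \fcref{1}--\fcref{3} against a three-case covering description in $\CoalP$ (the paper's Lemma~\ref{lemmaMsrDkTplnnK}, which is precisely the ``clean combinatorial description'' you anticipate needing), and then invoking Theorem~\ref{thmdoubling} for distributivity.

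The one substantive divergence is how you verify that $T$ is compatible. You propose to check it directly from a concrete description of joins and meets in $\pCoalP$, or alternatively to show that $A\mapsto A\cup\set w$ is a lattice homomorphism so that $T$ is its kernel. Both are plausible, but neither is immediate: the join and meet in a coalition lattice are \emph{not} simply coordinatewise $\max$/$\min$ on sorted height-vectors (coalitions are sets, not multisets), and showing that $A\mapsto A\cup\set w$ respects $\vee$ and $\wedge$ is itself a nontrivial combinatorial claim that would require essentially the same covering analysis you defer to Step~2. The paper sidesteps this by a different route: it first observes that $T$ is visibly an equivalence with two-element interval blocks, then applies Gr\"atzer's technical lemma (Lemma~\ref{lemmaGGtechn}) so that it only needs to check that $T$ spreads across covering squares, and this check is carried out using the covering characterization of Lemma~\ref{lemmaczhGrMqsnh}. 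This buys a cleaner argument because the covering lemma is needed anyway for Step~2, so the paper reuses it rather than developing a separate description of $\vee$ and $\wedge$.
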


Next, we state and prove some lemmas that will be needed in the proof of Lemma~\ref{lemmazBfmWvVjGtRx}; these lemmas can be of separate interest.

\begin{lemma}\label{lemmarnDrnjKszBl} 
If $\alg P$ is a finite chain and $A_1,A_2\in\CoalP$, then 
\[
A_1\cap A_2\subseteq A_1\wedge A_2\subseteq A_1\cup A_2
\,\,\text{and }\,\,
A_1\cap A_2\subseteq A_1\vee A_2\subseteq A_1\cup A_2.
\]
\end{lemma}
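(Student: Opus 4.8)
The plan is to reduce all four inclusions to a single integer-valued invariant that simultaneously controls the coalition order and ordinary set membership. Write the chain as $p_1<\dots<p_n$, and for a coalition $X\in\CoalP$ and $1\le i\le n$ put $h_X(p_i):=|X\cap\filter{p_i}|$, the number of elements of $X$ at or above $p_i$; set the phantom value $h_X(p_{n+1}):=0$. The elementary bookkeeping fact I would record first is that each difference $h_X(p_i)-h_X(p_{i+1})$ lies in $\set{0,1}$ and that
\[
p_i\in X\iff h_X(p_i)-h_X(p_{i+1})=1 .
\]
Thus membership of $p_i$ in any coalition is exactly a \emph{drop} of its $h$-sequence at $i$, and the whole lemma becomes a statement comparing the drops of $h_{A_1\wedge A_2}$ and $h_{A_1\vee A_2}$ with those of $h_{A_1}$ and $h_{A_2}$.

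First I would prove the order characterization
\[
X\le Y \iff h_X(p)\le h_Y(p)\text{ for every }p\in P .
\]
The forward direction is immediate, since an extensive map sends $\set{x\in X:x\ge p}$ injectively into $\set{y\in Y:y\ge p}$. For the converse I would match the $k$-th largest element of $X$ to the $k$-th largest element of $Y$: if $u$ is the $k$-th largest element of $X$ then $h_X(u)=k$, so the displayed inequality gives $h_Y(u)\ge k$, i.e.\ $Y$ has at least $k$ elements $\ge u$; thus the $k$-th largest element of $Y$ dominates $u$, and the resulting map is injective and extensive. Hence $X\le Y$.

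Next I would identify the lattice operations. Since a chain is upper bound free, $\CoalP$ is a lattice by Proposition~\ref{propczgpgy}\eqref{propczgpgya}. A one-line check shows that $\min(h_{A_1},h_{A_2})$ and $\max(h_{A_1},h_{A_2})$ are again nonincreasing with all drops in $\set{0,1}$, so each is the $h$-sequence of a (unique) coalition. Applying the order characterization in both directions, the coalition whose invariant is $\min(h_{A_1},h_{A_2})$ is a lower bound of $A_1$ and $A_2$ dominating every lower bound, hence equals $A_1\wedge A_2$; dually, the coalition with invariant $\max(h_{A_1},h_{A_2})$ equals $A_1\vee A_2$.

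With these identifications the four inclusions are a short case check on drops. If $p_i\in A_1\cap A_2$, then both $h_{A_1}$ and $h_{A_2}$ drop at $i$, so their minimum and their maximum also drop at $i$; this yields $A_1\cap A_2\subseteq A_1\wedge A_2$ and $A_1\cap A_2\subseteq A_1\vee A_2$. Conversely, if $p_i\notin A_1\cup A_2$, then neither $h_{A_1}$ nor $h_{A_2}$ drops at $i$, so neither their minimum nor their maximum drops there, giving $A_1\wedge A_2\subseteq A_1\cup A_2$ and $A_1\vee A_2\subseteq A_1\cup A_2$. I expect the main obstacle to be precisely the third step, namely the converse of the order characterization together with the closure of the invariants under $\min$ and $\max$, since this is what actually pins down $A_1\wedge A_2$ and $A_1\vee A_2$ as concrete subsets of $P$; once that identification is secured, the inclusions themselves no longer involve extensive maps and reduce to comparing integer differences.
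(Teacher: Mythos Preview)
Your argument is correct and takes a genuinely different route from the paper's. The paper proves $A_1\wedge A_2\subseteq A_1\cup A_2$ by induction on $|P|$, using a recursive description of the meet from \cite{czgpgy} (equation~\eqref{eqmTszvSzkzmWjTl}); it then obtains $A_1\cap A_2\subseteq A_1\vee A_2$ by applying the complementation anti-automorphism $A\mapsto P\setminus A$ from \cite{czgpgy}, and it imports the remaining two inclusions $A_1\cap A_2\subseteq A_1\wedge A_2$ and $A_1\vee A_2\subseteq A_1\cup A_2$ from \cite{czghorncoal}. Your approach is instead to set up the order isomorphism $X\mapsto h_X$ from $\CoalP$ onto the set of integer sequences with unit drops, identify meet and join with pointwise $\min$ and $\max$, and then read off all four inclusions simultaneously from the behaviour of drops. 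This is self-contained, avoids both the induction and the external citations, and treats the four inclusions symmetrically. It also yields, as a by-product, that $\CoalP$ is distributive (pointwise $\min$ and $\max$ distribute), which is exactly Corollary~\ref{corHnRjsB}; the paper deliberately proves this corollary by the longer doubling-tolerance route in order to exhibit an application of Theorem~\ref{thmdoubling}, so your shortcut would undercut the paper's narrative purpose even though it is mathematically cleaner for the lemma itself.
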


\begin{proof} First, 
with the notation given in 
Lemma~\ref{lemmarnDrnjKszBl}, let $A_1,A_2\in \CoalP$ be \emph{nonempty} coalitions. Let $c$ be the largest element of the  nonempty set
\[
H:=\set{x_1\wedge x_2: x_1\in A_1\text{ and }x_2\in A_2}.
\]
Since $\alg P$ is a chain, $c$ exists,  $H\subseteq A_1\cup A_2$, and, in particular, $c\in A_1\cup A_2$. Pick $b_1\in A_1$ and $b_2\in A_2$ such that $c=b_1\wedge b_2$.
For $i\in\set{1,2}$, let $a_i=c$ if $c\in A_i$, and let $a_i=b_i$ otherwise. Then $a_1\in A_1$, $a_2\in A_2$, and $c=a_1\wedge a_2$. Let $A_i':=A_i\setminus\set {a_i}$ for $=1,2$, and let $\widehat  P:=P\setminus\set c$. Then $\poset{\widehat P}=:\widehat{\alg P}$ is a subchain of $\alg P$.
For $i\in\set{1,2}$, we have that $c\notin A'_i$, since otherwise $c\in A_i'\subseteq A_i$ would give that $a_i=c$ and so $a_i=c\in A_i'=A_i\setminus\set{a_i}$ would be a contradiction. Hence, and $A_1',A_2'\in \whCoalP$. Let $C'$ be the meet of $A_1'$ and $A_2'$ in $\whCoalP$. As a particular case of   Cz\'edli and Poll\'ak~\cite[Proposition 1]{czgpgy}, 
\begin{equation}
A_1\wedge A_2=C'\cup\set{c}\text{, that is, }
A_1\wedge_{\CoalP}A_2 = (A_1'\wedge_{\whCoalP} A_2')\cup\set c.
\label{eqmTszvSzkzmWjTl}
\end{equation}
Next, we claim that 
\begin{equation}
\text{for any $A_1,A_2\in \CoalP$, we have that $A_1\wedge A_2\subseteq A_1\cup A_2$.}
\label{eqtxtMtrszgstsNk}
\end{equation}
We prove this by induction on $|P|$. If $\alg P$ is a singleton, then \eqref{eqtxtMtrszgstsNk} is clear.
If $A_1$ or $A_2$ is the empty coalition, then so is $A_1\wedge A_2$ and \eqref{eqtxtMtrszgstsNk} is clear again. So, for the induction step, we can assume that $|P|>1$, \eqref{eqtxtMtrszgstsNk} holds for smaller chains, and none of $A_1$ and $A_2$ is empty. With the notation used in \eqref{eqmTszvSzkzmWjTl}, $c\in A_1\cup A_2$. By the induction hypothesis, $C'=A_1'\wedge_{\whCoalP} A_2'\subseteq A_1'\cup A_2'\subseteq A_1\cup A_2$. Hence, \eqref{eqmTszvSzkzmWjTl} implies that $A_1\wedge A_2\subseteq A_1\cup A_2$, proving \eqref{eqtxtMtrszgstsNk}.

Since the map $\delta\colon \CoalP\to\CoalP$, defined by $A\mapsto P\setminus A$, is a dual lattice automorphism by  Cz\'edli and Poll\'ak~\cite[Proposition 2]{czgpgy} and $\delta$ is an involution, we obtain  that $A_1\vee A_2=\delta(\delta(A_1)\wedge \delta(A_2))$. Note that  $\delta$ is also a dual automorphism of the powerset lattice $(\Pow P;\subseteq)$ by the de Morgan laws. Hence, letting $X_i=\delta(A_i)$, applying  \eqref{eqtxtMtrszgstsNk} for $X_1$ and $X_2$, and using that $\delta$ is an involution, we obtain that for any $A_1,A_2\subseteq \CoalP$, 
\begin{equation}
A_1\vee A_2=\delta(X_1\wedge X_2)\overset{\eqref{eqtxtMtrszgstsNk}}\supseteq \delta(X_1\cup X_2)=\delta(X_1)\cap\delta(X_2)=A_1\cap A_2.
\label{eqzrdPfTfsStmMgr}
\end{equation}
It has been proved in Cz\'edli~\cite[displays in page 102]{czghorncoal} that $A_1\cap A_2\subseteq A_1\wedge A_2$ and $A_2\vee A_2\subseteq A_1\cup A_2$, even without assuming that $\alg P$ is a chain. Combining these inequalities with \eqref{eqtxtMtrszgstsNk} and 
\eqref{eqzrdPfTfsStmMgr}, we obtain the statement of the lemma.
\end{proof}

Note that if $\alg V$ is the three-element meet-semilattice that is not a lattice, then 
$\Coal{\alg V}$ is a lattice but
there are singleton coalitions $A_0,A_1,A_2\in\Coal{\alg V}$ such that $A_1\wedge A_2=A_0\not\subseteq A_1\cup A_2$. Hence, the assumption that $\alg P$ is a chain is essential in Lemma~\ref{lemmarnDrnjKszBl}.

The following statement is taken from Cz\'edli~\cite[Lemma 1]{czgJH}.

\begin{lemma}[{Cz\'edli~\cite[Lemma 1]{czgJH}}]\label{lemmaskzsRsznmCsnsT}
If $X\leq Y$ in a coalition lattice, then there exists an extensive map $X\to Y$ that acts identically on $X\cap Y$. 
\end{lemma}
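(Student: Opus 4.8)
The plan is to start from an arbitrary extensive map $\phi\colon X\to Y$ witnessing $X\leq Y$, and to repair it on the set $Z:=X\cap Y$ so that it becomes the identity there, without destroying extensivity or injectivity. The obstruction is that an element $z\in Z$ with $\phi(z)\neq z$ has $z<\phi(z)$, and $z$ itself (being in $Y$) may already be the image $\phi(x)$ of some other $x\in X$ with $x\leq z$; so one cannot simply redefine $\phi(z):=z$. This is a classical "rerouting along an alternating path'' argument, essentially the finiteness version of the fact that injective extensive maps can be modified to fix points of the common part.

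First I would fix notation: let $Z=X\cap Y$ and call $z\in Z$ \emph{bad} (for $\phi$) if $\phi(z)\neq z$. If there are no bad elements we are done, so assume some exist and pick one, say $z_0$. Starting from $z_0$, build a sequence $z_0, z_1, z_2, \dots$ in $Y$ by the rule: having reached $z_k\in Y$, if $z_k$ is not in the image of $\phi$, stop; otherwise let $z_{k+1}:=\phi^{-1}(z_k)$, the unique preimage (injectivity). Since $\phi$ is extensive, $z_{k+1}\leq z_k$, and $z_{k+1}=z_k$ would force $z_k=\phi(z_k)$, which is false for $z_k=z_0$ and, inductively, is false as long as the chain stays strictly decreasing; because $\alg P$ is finite the chain must terminate, so there is a last element $z_m$ which is not in the image of $\phi$ but satisfies $z_m\leq z_{m-1}\leq\dots\leq z_1\leq z_0$, with $z_0,\dots,z_{m-1}$ all in the image of $\phi$ (hence $z_0,\dots,z_{m-1}\in Y$, and $z_m\in Y$ as well since $z_m=\phi^{-1}(z_{m-1})\in X$... — more carefully: $z_k$ for $k\geq 1$ lies in $X$ as a preimage, and $z_k$ lies in $Y$ because $z_k=\phi^{-1}(z_{k-1})$ means $z_{k-1}=\phi(z_k)\in Y$, and we separately need $z_k\in Y$; in fact $z_0\in Z\subseteq Y$ and each $z_k\le z_0$, but membership in $Y$ of the intermediate $z_k$ is not automatic, so I would instead track only what is needed: $z_0,\dots,z_{m-1}$ are in the image of $\phi$ and $z_1,\dots,z_m\in X$).

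Now define a new map $\phi'$ by rerouting along this path: set $\phi'(z_k):=z_k$ for $k=0,1,\dots,m-1$ (shift each preimage up to itself), and leave $\phi'$ unchanged elsewhere; equivalently $\phi'(w)=\phi(w)$ for $w\notin\{z_1,\dots,z_m\}$ and $\phi'(z_k)=z_{k-1}$ for $k=1,\dots,m$ — I would pick whichever bookkeeping is cleanest and check three things. (1) $\phi'$ is well-defined and has domain $X$: the only arguments changed are $z_1,\dots,z_m$, all in $X$. (2) $\phi'$ is extensive: for $k\geq 1$, $\phi'(z_k)=z_{k-1}\geq z_k$, and on unchanged arguments extensivity is inherited from $\phi$. (3) $\phi'$ is injective: the values on the changed arguments are exactly $\{z_0,\dots,z_{m-1}\}$; the value $z_0$ was formerly $\phi(z_0)\neq z_0$ and is now newly attained, but $z_0$ is \emph{not} in the old image restricted to the unchanged part (its only $\phi$-preimage was $z_0$ itself, which is a changed argument — wait, $z_0$ may not be in the image of $\phi$ at all; in any case its unique preimage if it exists is $z_1$, a changed argument), and the values $z_1,\dots,z_{m-1}$ are removed from the "unchanged'' image exactly because their preimages $z_2,\dots,z_m$ are the changed arguments; so no collisions occur. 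The key quantity that strictly increases is $|\{z\in Z:\phi(z)=z\}|$ — indeed $z_0$ becomes fixed and no previously fixed point of $Z$ is disturbed (a fixed point $z$ of $\phi$ in $Z$ is never one of $z_1,\dots,z_m$, since those satisfy $\phi'(z_k)=z_{k-1}>z_k$... here I should double-check $z_1,\dots,z_m$ are distinct from old fixed points, which follows since $z_k<z_{k-1}=\phi(z_k)$ for the relevant indices). Hence after finitely many such reroutings all elements of $Z$ are fixed, and the resulting extensive map acts identically on $X\cap Y$. The main obstacle is precisely the injectivity check in step (3): one must verify that inserting $z_0$ into the image and deleting the "absorbed'' intermediate values $z_1,\dots,z_{m-1}$ leaves a bijection onto the new image, and that the choice of $z_m$ as a non-image element is what makes this clean.
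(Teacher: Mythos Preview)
The paper does not prove this lemma here; it is quoted from \cite{czgJH} without proof, so there is no in-paper argument to compare against. Your overall strategy --- take a bad $z_0\in Z=X\cap Y$, follow the preimage chain $z_{k+1}:=\phi^{-1}(z_k)$ down to some $z_m\notin\mathrm{image}(\phi)$, reroute, and iterate --- is the right one and is the standard proof. But both of your candidate reroutings are broken as written, and they are not equivalent to each other. In the first (``$\phi'(z_k):=z_k$ for $k=0,\dots,m-1$, unchanged elsewhere''), when $m\geq 1$ you get $\phi'(z_m)=\phi(z_m)=z_{m-1}=\phi'(z_{m-1})$, so injectivity fails; and when $m=0$ the index range is empty and nothing changes at all, so $z_0$ stays bad. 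In the second (``$\phi'(z_k):=z_{k-1}$ for $k=1,\dots,m$, unchanged elsewhere''), note that $\phi(z_k)=z_{k-1}$ already holds by construction of the chain, so $\phi'=\phi$ identically and again $z_0$ stays bad.

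The missing ingredient is the target $\phi(z_0)$ that you free up when you set $\phi'(z_0):=z_0$. A correct rerouting is: $\phi'(z_k):=z_k$ for $k=0,\dots,m-1$ \emph{and} $\phi'(z_m):=\phi(z_0)$, unchanged elsewhere (for $m=0$ read this simply as $\phi'(z_0):=z_0$). Then the set of values taken on $\{z_0,\dots,z_m\}$ is $\{z_0,\dots,z_{m-1},\phi(z_0)\}$, the same as before, so injectivity is immediate; and $z_m<z_0<\phi(z_0)$ gives extensivity at $z_m$. In fact you need not build the whole chain: the single swap $\phi'(z_0):=z_0$, $\phi'(z_1):=\phi(z_0)$ (or just $\phi'(z_0):=z_0$ when $z_0\notin\mathrm{image}(\phi)$) already strictly increases $|\{z\in Z:\phi(z)=z\}|$, since $z_1<z_0<\phi(z_0)$ and $\phi(z_1)=z_0\neq z_1$ shows $z_1$ was not previously fixed. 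Iterating finishes the proof.
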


The \emph{height} $\height(x)$ of an element $x$ of a chain $\alg P$ is defined in the usual way: $\height(x)=k$ if and only if $\ideal x:=\set{y\in P: y\leq x}$ consists of $k+1$ elements. 
The \emph{strength} $\str X$ of a coalition $X$ is defined to be 
\[
\str X:=|X|+\sum_{u\in X} \height(u) = \sum_{u\in X} |\ideal u|.
\]
Note that for $X\leq Y$ in $\CoalP$, we have that $\str X\leq \str Y$. With these concepts,  we can describe the covering relation in $\CoalP$ as follows. Note that, as opposed to some parts of mathematics (far from lattice theory), here $A\subset B$ means the conjunction of $A\neq B$ and $A\subseteq B$.

\begin{lemma}\label{lemmaczhGrMqsnh} Let $\alg P=\poset P$ be a finite chain with $|P|\geq 2$, its smallest element is denoted by $0$. Let $A,B\in \CoalP$ such that $A<B$. Then $A\prec B$ in $\CoalP$ if and only if the following two conditions hold.
\begin{enumeratei}
\item\label{lemmaczhGrMqsnha} $\str B=\str A+1$;
\item\label{lemmaczhGrMqsnhb} either $A\subset B$, or $|A|=|B|=1+|A\cap B|$.
\end{enumeratei}
Furthermore, for later reference, we note that 
\begin{enumeratei}\setcounter{enumi}{2}
\item\label{lemmaczhGrMqsnhc} if $A\prec B$ and $A\subset B$, then $B=A\cup\set 0$;
\item\label{lemmaczhGrMqsnhd} if $A\prec B$ and $|A|<|B|$, then $B=A\cup\set 0$.
\end{enumeratei}
\end{lemma}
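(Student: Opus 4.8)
The statement to prove is Lemma~\ref{lemmaczhGrMqsnh}, the characterization of the covering relation in a coalition lattice of a finite chain. I would organize the argument around the strength function, which is monotone along $<$ and strictly increasing along $\prec$ because $\str$ takes integer values and $A<B$ forces $\str A<\str B$. The overall plan is: first reduce the problem to a normal form for the witnessing extensive map; then prove the ``only if'' direction by showing that if (i) or (ii) fails one can insert an intermediate coalition; and finally prove the ``if'' direction by showing that (i) and (ii) together leave no room for an intermediate coalition. Parts (iii) and (iv) will come out as easy byproducts of the analysis of case $A\subset B$ and the size comparison.

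\textbf{Normal form and preliminaries.} Since $A<B$, fix an extensive map $\phi\colon A\to B$ which, by Lemma~\ref{lemmaskzsRsznmCsnsT}, acts identically on $A\cap B$. For $x\in A\setminus(A\cap B)$ we then have $\phi(x)\in B\setminus A$ with $x<\phi(x)$ (strictly, since $\phi(x)\notin A$). I would record the inequality $\str X\leq\str Y$ for $X\leq Y$, noting that equality of strengths forces the extensive witness to be the inclusion when one coalition contains the other, and more generally that strict inequality $x<\phi(x)$ contributes at least $1$ to $\str B-\str A$ for each $x$ moved. Thus $\str B-\str A\geq |A\setminus(A\cap B)|$ plus the ``bulk'' contribution $\sum_{u\in B\setminus\phi(A)}|\ideal u|$ of the elements of $B$ not hit by $\phi$; hence $\str B=\str A+1$ already forces $|A\setminus(A\cap B)|\leq 1$ and $|B\setminus\phi(A)|\leq 1$, which is the arithmetic skeleton behind condition (ii).

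\textbf{The ``only if'' direction.} Assume $A\prec B$. For (i): if $\str B\geq \str A+2$, I would construct a coalition $C$ strictly between $A$ and $B$. Using the normal-form map $\phi$, consider the cases: if $B\setminus\phi(A)\neq\emptyset$, pick $u\in B\setminus\phi(A)$ and let $C=\phi(A)\cup\set u$ if this differs from $A$ and from $B$; otherwise modify one value $\phi(x)$ down to an intermediate element of the chain (possible precisely because the chain is totally ordered, so the interval $[x,\phi(x)]$ is a chain with an element strictly between when $\height$ differs by more than one, and when it differs by exactly one we instead add a bottom element or shift another coordinate). The point is that $\str B-\str A\geq 2$ always provides at least two ``units'' of slack that can be distributed to realize an intermediate strength, hence an intermediate coalition. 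For (ii): with $\str B=\str A+1$ fixed, the skeleton inequality above gives $|A\setminus(A\cap B)|\leq 1$ and $|B\setminus\phi(A)|\leq 1$; a short case check on these two bits (both zero, i.e.\ $\phi$ is a bijection onto $B$ and $A\subseteq B$; or one of them one) yields exactly the dichotomy ``$A\subset B$ or $|A|=|B|=1+|A\cap B|$,'' and in the first subcase the single extra element of $B$ must have $\height 0$, i.e.\ be $0$, giving (iii); the size-strict case $|A|<|B|$ likewise forces that extra element to be $0$, giving (iv).

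\textbf{The ``if'' direction and the main obstacle.} Conversely, suppose (i) and (ii) hold and $A<C<B$. Then $\str A<\str C<\str B=\str A+1$ is impossible since strengths are integers --- so there is nothing left to do. This makes the ``if'' direction essentially immediate once (i) is in hand; the real content is entirely in the ``only if'' direction, and within it the delicate step is the explicit construction of the intermediate coalition $C$ when $\str B-\str A\geq 2$. The obstacle there is that one cannot merely ``remove an element'' or ``lower one value'' blindly: one must ensure the resulting set is genuinely a coalition lying strictly above $A$ and strictly below $B$, and the bookkeeping bifurcates according to whether the excess strength sits in unmatched elements of $B$ or in the gaps $[x,\phi(x)]$. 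I expect to handle this by always first trying to adjoin $0$ (which raises strength by exactly $1$ and is available unless $0\in A$, in which case $0\in A\cap B$ and one instead pushes the slack onto a matched pair), then, if $0$ is already present, peeling the top element off an unmatched chunk of $B$ or truncating one $\phi$-value by one step in the chain; verifying in each subcase that $C\neq A$, $C\neq B$, and $A\leq C\leq B$ is the routine-but-careful part.
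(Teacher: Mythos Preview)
Your plan is on the same track as the paper: normalize $\phi$ via Lemma~\ref{lemmaskzsRsznmCsnsT}, get the ``if'' direction from integrality of $\tstr$, and deduce (iii), (iv) from the case analysis. Your route to (ii) --- from (i) together with the inequality $\str B-\str A\geq |A\setminus B|+|B\setminus\phi(A)|$ --- is in fact tidier than the paper's, which instead first pins down the structure of $A$ and $B$ (a sharpened form of (ii)) via a chain of intermediate coalitions and then reads off (i) afterwards.

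There is, however, a genuine gap in your construction of an intermediate $C$ when $\str B-\str A\geq 2$. Take the case $|A|=|B|$, $A\setminus B=\{a\}$, $B\setminus A=\{b\}$, and every element of the open interval $(a,b)$ in $\alg P$ already lies in $X:=A\cap B$. None of your three moves works here: adjoining $0$ to $A$ produces a set of size $|B|+1$, which cannot inject into $B$; there is no ``unmatched chunk'' of $B$ to peel since $\phi(A)=B$; and ``truncating $\phi(a)=b$ down'' to some $y\in(a,b)$ yields $\phi'(a)=y=\phi'(y)$, destroying injectivity (equivalently, the resulting set $X\cup\{y\}$ is just $X$). The paper resolves exactly this case with a \emph{swap}: let $z$ be the least element of $X\cap(a,b)$ and set $C:=(X\setminus\{z\})\cup\{a,b\}$; then $A<C$ via $z\mapsto b$ (identity elsewhere) and $C<B$ via $a\mapsto z$ (identity elsewhere). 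You correctly flagged that one ``cannot merely lower one value blindly,'' but the actual fix --- removing an element of $X$, not just adjusting a $\phi$-value --- is the idea missing from your sketch.
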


\begin{proof}
Observe that 
\begin{equation}
\text{\eqref{lemmaczhGrMqsnha} and \eqref{lemmaczhGrMqsnhb} together imply that $A<B$ in $\CoalP$.}
\label{eqtxtHmTszGnGkRs}
\end{equation}
Indeed, if $A\subset B$, then $A<B$ is obvious. Assume that  $|A|=|B|=1+|A\cap B|$, then $A=X\cup\set a$ and $B=X\cup\set b$ with $\set{a,b}\cap X=\emptyset$ and $a\neq b$. It follows from \eqref{lemmaczhGrMqsnha} that $\height(a)<\height(b)$ and so $a<b$. Thus, $\set{(a,b)}\cup\idmap X\colon A\to B$ is an extensive map, and so $A\leq B$. But $A\neq B$ by  \eqref{lemmaczhGrMqsnha}, and we conclude that $A<B$, as required. This proves \eqref{eqtxtHmTszGnGkRs}. 
Our next observation is that 
\begin{equation}
\text{if $A,B\in \CoalP$, then $A<B$ implies that $\str A<\str B$.}
\label{eqtxtkcsNTszGfhrJ}
\end{equation}
In order to see this, assume that $A<B$. Pick an extensive map $\phi\colon A\to B$. 
Since $A\neq B$,  either $\phi$ is not surjective, or $x<\phi(x)$ for some $x\in A$, in addition to $(\forall y\in A)(y\leq \phi(y))$, whereby $\str A<\str B$ follows easily, proving 
\eqref{eqtxtkcsNTszGfhrJ}.
Combining \eqref{eqtxtHmTszGnGkRs} and \eqref{eqtxtkcsNTszGfhrJ}, we obtain immediately that 
the conjunction of \eqref{lemmaczhGrMqsnha} and \eqref{lemmaczhGrMqsnhb} implies that $A\prec B$ in $\CoalP$.

Next, assume that $A\prec B$. We are going to prove 
that   \eqref{lemmaczhGrMqsnha} and \eqref{lemmaczhGrMqsnhb} hold. There are two subcases, depending on $A\subseteq B$ or $A\not\subseteq B$. First we deal with the case $A\subseteq B$. 
Since $A\neq B$, we have that $A\subset B$. In particular, we have already obtained that \eqref{lemmaczhGrMqsnhb}  holds.  
If $B\setminus A$ had two distinct elements, $x$ and $y$, then $A\subset A\cup\set{x}\subset A\cup\set{x,y}\subseteq B$ would give that  $A< A\cup\set{x}< A\cup\set{x,y}\leq B$, contradicting $A\prec B$. Hence, $B\setminus A$ is a singleton $\set{u}$, that is, $B=A\cup\set u$. We claim that $u=0$. Suppose the contrary. Then  $A\cup\set 0 < A\cup\set u=B$, either because $0\in A$ and $A\cup\set 0=A\subset  A\cup\set u=B$, or because $0\notin A$ and the extensive bijection $\idmap A\cup\set{(0,u)}\colon A\cup\set 0\to A\cup\set u=B$ is not the identity map since $0\neq u$. 
Hence, $A\leq A\cup\set{0}<A\cup\set{u}=B$, where the first inequality  must be an equality since $A\prec B$. Thus, $0\in A$ and so $A\neq B\setminus\set 0$.  Also, $0\in B$, because $A\subset B$, whence $B\setminus\set 0\neq B$, whereby $B\setminus\set 0\subset B$ and $B\setminus\set 0< B$.
Since the map $A\to B\setminus\set 0$,
defined by $0\to u$ and $x\mapsto x$ for $x\in A\setminus\set 0$,  is extensive, we obtain that $A\leq B\setminus 
\set 0$. In fact, $A< B\setminus 
\set 0$ since $A\neq B\setminus\set 0$. This inequality together with  $B\setminus\set 0< B$ contradict 
$A\prec B$. Therefore, $u=0$, 
$B=A\cup \set 0\supset A$, and $\str B=\str A+\height(0)+1=\str A +1$. Thus, \ref{lemmaczhGrMqsnh}\eqref{lemmaczhGrMqsnha} and \ref{lemmaczhGrMqsnh}\eqref{lemmaczhGrMqsnhb} hold, as required. 

Second, still assuming that $A\prec B$, we deal with the case $A\not\subseteq B$, that is  $A\setminus B\neq \emptyset$. 
We write $A\setminus B$ in the form $A\setminus B=\set{a_1,a_2,\dots,a_k}$ where $t\geq 1$ and the elements $a_1$, \dots, $a_t$ are pairwise distinct. Choose an extension map $\phi\colon A\to B$ according to Lemma~\ref{lemmaskzsRsznmCsnsT}. 
Since $\phi$ acts identically on $X:= A\cap B$ and $\phi$ is injective, we have that $a_1\leq \phi(a_1)=:b_1$, \dots,  $a_t\leq \phi(a_t)=:b_t$ are outside $A\cap B$, so they are in $B\setminus  A$. Since $a_i\notin B$ but $b_i\in B$, we obtain that 
\begin{equation}
\text{$a_i<b_i$, \ for $i=1,\dots,t$.}
\label{eqtxtdznBgthRmn}
\end{equation} 
With reference to the injectivity of $\phi$ again, we obtain that the elements $b_1$, \dots, $b_t$ are pairwise distinct.
Using the extensive maps
\begin{align*}
&\restrict\phi{X}\cup\set{(a_1,b_1),(a_2,a_2),(a_3,a_3),\dots,(a_t,a_t)},\cr
&\restrict\phi{X}\cup\set{(b_1,b_1),(a_2,b_2),(a_3,a_3),\dots ,(a_t,a_t)}, \cr
&\dots\cr
&\restrict\phi{X}\cup\set{(b_1,b_1),(b_2,b_2),\dots,(b_{t-1},b_{t-1}),(a_t,b_t)},
\end{align*}
and the fact that $X\cup\set{b_1,b_2,\dots,b_t}\subseteq B$ implies that $X\cup\set{b_1,b_2,\dots,b_t}\leq B$, we obtain that 
\begin{equation}\left.
\begin{aligned}
A&=X\cup\set{a_1,\dots,a_t} <
X\cup\set{b_1,a_2,\dots,a_t}\cr
&<
X\cup\set{b_1,b_2,a_3,\dots,a_t}
<X\cup\set{b_1,b_2,b_3, a_3,\dots,a_t}\cr
&<\dots<
X\cup\set{b_1,b_2,\dots,b_t}\leq B.
\end{aligned}
\right\}
\label{eqbzTjsGtttstT}
\end{equation}
Now we are in the position to conclude from  $A\prec B$ and \eqref{eqbzTjsGtttstT} that $t=1$. In order to ease the notation, we let $a:=a_1$ and $b:=b_1$. 
Tailoring \eqref{eqbzTjsGtttstT} to this new notation and $t=1$, we have that 
$A=X\cup\set a<X\cup\set b\leq B$. Hence, taking $A\prec B$ into account, $B=X\cup \set b$. Using \eqref{eqtxtdznBgthRmn}, we can summarize the situation as follows.
\begin{equation}
A=X\cup\set a,\quad B=X\cup \set b,\quad X=A\cap B,\quad\text{and}\quad a<b.
\label{eqvScrlngst}
\end{equation} 
Clearly, \ref{lemmaczhGrMqsnh}\eqref{lemmaczhGrMqsnhb} is an immediate consequence of \eqref{eqvScrlngst}.
We are going to show that 
$\height(b)=\height(a)+1$, because then  \ref{lemmaczhGrMqsnh}\eqref{lemmaczhGrMqsnha} will automatically follow from \eqref{eqvScrlngst}.  
For the sake of contradiction, suppose that $\height(b)\neq \height(a)+1$. Then  $a<b$ yields that $\height(b)\geq \height(a)+2$. If the whole interval $[a,b]$  is disjoint from $X$, then we can pick an element $y$ such that $a< y < b$ (that is,  $\height(a)<\height(y)<\height(b)$ since $\alg P$ is a chain); this $y$ is not in $X$ and, witnessed by  straightforward extensive functions extending $\idmap X$, we have that $A=X\cup\set a<X\cup\set y<X\cup\set b=B$, contradicting $A\prec B$. Hence, $X\cap[a,b]\neq\emptyset$, and so there is a unique smallest element $z\in X$ such that $a < z < b$. 
Let $C:=(A\setminus\set z)\cup \set b= (X\setminus\set z)\cup \set{a,b}=(B\setminus\set z)\cup\set a$. Clearly, $z\in X=A\cap B$ and $z\notin C$ give that $A\neq C\neq B$. The extension functions
\begin{align*}
&A\to C\text{, defined by }
x\mapsto
\begin{cases}
b,&\text{if }x=z,\cr
x,&\text{otherwise}
\end{cases}\quad \text{and}\cr
&
C\to B\text{, defined by }
x\mapsto
\begin{cases}
z,&\text{if }x=a,\cr
x,&\text{otherwise,}
\end{cases}
\end{align*}
see Figure~\ref{figurd} for an illustration, indicate that 
$A <  C < B$, contradicting $A\prec B$. 
This shows that $\height(b)=\height(a)+1$. Thus, with the exception of \eqref{lemmaczhGrMqsnhc} and  \eqref{lemmaczhGrMqsnhd}, the lemma is proved.

\begin{figure}[htb] 
\centerline
{\includegraphics[scale=1.0]{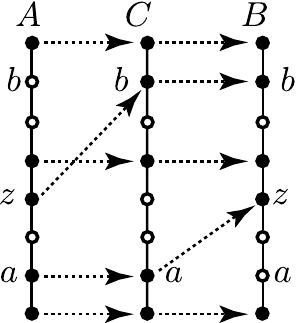}}
\caption{Illustration for $\height(b)\geq \height(a)+2$; $A$, $B$, and $C$ consist of the black-filled elements
\label{figurd}}
\end{figure}%

Next, to prove \eqref{lemmaczhGrMqsnhc}, assume that 
$A\prec B$ and $A\subset B$. We have already proved that \eqref{lemmaczhGrMqsnha} holds. We can write $B$ in the form $A\cup\set{c_1,\dots, c_k}$, where $k\geq 1$ and $A\cap\set{c_1,\dots, c_k}=\emptyset$. Using \eqref{lemmaczhGrMqsnha}, we have that 
$\str A+1=\str B=\str A+k+\height(c_1)+\dots+\height(c_k)$, which implies that $k=1$ and $\height(c_1)=0$, that is, $c_1=0$. Thus, $B=A\cup \set 0$, as required. Therefore, \eqref{lemmaczhGrMqsnhc} holds.

Finally, to prove \eqref{lemmaczhGrMqsnhd}, assume that $A\prec B$ and $|A|<|B|$. Pick an extensive map $\phi\colon A\to B$. Since $|A|<|B|$ and $\phi$ is injective, $\phi(A)\subset B$, whereby $\phi(A)<B$. Hence, $A\leq \phi(A)<B$ and $A\prec B$ imply that $A=\phi(A)$. Thus, $A=\phi(A)\subset B$, and  \eqref{lemmaczhGrMqsnhc} applies. This proves \eqref{lemmaczhGrMqsnhd}, and the proof of Lemma~\ref{lemmaczhGrMqsnh} is complete.
\end{proof}

\begin{lemma}\label{lemmaMsrDkTplnnK} 
Let $\alg P=\poset P$ be a finite chain with at least two elements,
and 
let $A,B\in \CoalP$. Let the smallest element and the unique atom of $\alg P$ be denoted by $0$ and $w$, respectively. Let $\alg{P'}$ be the subchain $P'=P\setminus\set 0=\filter w$ with the inherited ordering. 
Then  $B$ covers $A$ (in notation, $A\prec B$) in the coalition lattice $\CoalP$ if and only if one of the following three possibilities hold.
\begin{enumerate}[\qquad\textup{({cov$^\ast$-}}$1$)]
\item\label{paa} $0\notin A$ and $B=A\cup\set 0$.
\item\label{pab} 
\begin{enumerate}[\kern-9pt\upshape(i)]
\item\label{pab1}  Either $0\notin A\cup B$, $B\neq A\cup\set w$, and $A\prec B$ in $\pCoalP$, 
\item\label{pab2} or $0\in A\cap B$, $B\neq A\cup\set w$, and $A\setminus\set 0\prec B\setminus\set 0$ in $\pCoalP$.  
\end{enumerate}
\item\label{pta} $0\in A$, $w\notin A$, and $B=(A\setminus\set 0)\cup\set w$.
\end{enumerate}
\end{lemma}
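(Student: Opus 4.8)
The plan is to prove Lemma~\ref{lemmaMsrDkTplnnK} by unfolding the covering relation in $\CoalP$ through the characterization already available in Lemma~\ref{lemmaczhGrMqsnh}, and then translating the ``strength increases by one'' and ``set-theoretic shape'' conditions into statements about $\pCoalP$. The three cases \pref{paa}, \pref{pab}, \pref{pta} correspond exactly to whether the new edge ``touches'' the bottom element $0$ of $\alg P$: in \pref{paa} the covering step is the pure addition of $0$, in \pref{pta} it is the replacement of $0$ by the atom $w$, and in \pref{pab} the step happens ``above $0$'' so that restricting to $P'$ makes sense. I would organize the proof as a chain of equivalences: $A\prec B$ in $\CoalP$ $\iff$ (Lemma~\ref{lemmaczhGrMqsnh}) conditions \ref{lemmaczhGrMqsnh}\eqref{lemmaczhGrMqsnha}--\eqref{lemmaczhGrMqsnhb} $\iff$ one of \pref{paa}, \pref{pab}, \pref{pta}.

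For the forward direction, assume $A\prec B$. By Lemma~\ref{lemmaczhGrMqsnh}\eqref{lemmaczhGrMqsnhb} there are two sub-situations. If $A\subset B$, then Lemma~\ref{lemmaczhGrMqsnh}\eqref{lemmaczhGrMqsnhc} gives $B=A\cup\set 0$, and since $A\subset B$ forces $0\notin A$, we land in \pref{paa}. Otherwise $|A|=|B|=1+|A\cap B|$, so as in the proof of Lemma~\ref{lemmaczhGrMqsnh} we can write $A=X\cup\set a$, $B=X\cup\set b$ with $X=A\cap B$, $a\notin B$, $b\notin A$, $a<b$, and $\height(b)=\height(a)+1$. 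Now split on whether $0\in A$. If $0\notin A$ then $0\ne a$; either $0\notin B$ too, in which case $a,b\in P'$, $X\subseteq P'$, everything lives in $\pCoalP$, and I must check $A\prec B$ in $\pCoalP$ --- this follows again from Lemma~\ref{lemmaczhGrMqsnh} applied to $\alg P'$ together with the fact that heights in $\alg P'$ differ from heights in $\alg P$ by a fixed constant $1$, so the ``$\str_{\alg P'} B = \str_{\alg P'} A + 1$'' condition is equivalent to its $\alg P$-counterpart; and $B\ne A\cup\set w$ because $A\cup\set w$ has size $|A|+1>|B|$ when $w\notin A$, or if $w\in A$ then $A\cup\set w=A\ne B$. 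If $0\in B$ but $0\notin A$, then $b=0$ (as $b$ is the unique element of $B\setminus A$ and it is the one not in $A$ that is $0$), forcing $a<0$, impossible. If $0\in A$, then $0\in A\cap B = X$ provided $0\ne a$; the remaining possibility is $a=0$, and then $\height(b)=\height(0)+1=1$, i.e. $b=w$, giving $B=(A\setminus\set 0)\cup\set w$ with $w\notin A$ (since $w\notin A$ would be needed for $|A|=1+|A\cap B|$; indeed if $w\in A$ then $w\in X\subseteq B$ contradicts $b=w\notin A$... so $w\notin A$), which is exactly \pref{pta}. If instead $0\in X$, we are in case \pref{pab}\eqref{pab2}: pass to $A'=A\setminus\set 0$, $B'=B\setminus\set 0$ in $\pCoalP$ and verify $A'\prec B'$ there, again by transferring the strength/shape conditions of Lemma~\ref{lemmaczhGrMqsnh} (the height shift by $1$ cancels in the difference), and check $B\ne A\cup\set w$ by the same size/membership bookkeeping.

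For the converse, I would take each of \pref{paa}, \pref{pab}\eqref{pab1}, \pref{pab}\eqref{pab2}, \pref{pta} in turn and verify conditions \ref{lemmaczhGrMqsnh}\eqref{lemmaczhGrMqsnha}--\eqref{lemmaczhGrMqsnhb} of Lemma~\ref{lemmaczhGrMqsnh}, whence $A\prec B$. For \pref{paa}: $B=A\cup\set 0$ with $0\notin A$ gives $\str B=\str A+1$ (since $\height(0)=0$) and $A\subset B$, so we are done. For \pref{pta}: $B=(A\setminus\set 0)\cup\set w$ with $0\in A$, $w\notin A$ gives $|A|=|B|$, $A\cap B=A\setminus\set 0=B\setminus\set w$, so $|A|=1+|A\cap B|$, and $\str B-\str A=\height(w)-\height(0)=1-0=1$. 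For \pref{pab}\eqref{pab1} and \eqref{pab2}: use that $A\prec B$ (resp. $A\setminus\set 0\prec B\setminus\set 0$) in $\pCoalP$ already supplies the $\alg P'$-analogues of \ref{lemmaczhGrMqsnh}\eqref{lemmaczhGrMqsnha}--\eqref{lemmaczhGrMqsnhb}, then lift them to $\alg P$: the shape condition \eqref{lemmaczhGrMqsnhb} is purely set-theoretic and is preserved (adding $0$ to both sides in case \eqref{pab2} keeps $|A|=|B|=1+|A\cap B|$ or keeps $A\subset B$), while the strength condition \eqref{lemmaczhGrMqsnha} is preserved because $\str_{\alg P}$ and $\str_{\alg P'}$ differ, on coalitions avoiding $0$, by $|\cdot|$ times the constant height shift plus a correction for $0$ itself, and in every case the difference $\str B-\str A$ is unchanged; the hypothesis $B\ne A\cup\set w$ is used precisely to rule out the one exceptional covering (the one that would instead be \pref{pta} or fail \eqref{lemmaczhGrMqsnhb}) --- one checks that if $A\prec B$ in $\pCoalP$ with $A\subset B$ then $B=A\cup\set w$ by Lemma~\ref{lemmaczhGrMqsnh}\eqref{lemmaczhGrMqsnhc} applied to $\alg P'$ (whose bottom is $w$), which is exactly the excluded case.

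I expect the main obstacle to be the precise bookkeeping around the atom $w$ and the bottom $0$: one must be careful that ``heights in $\alg P'$'' equal ``heights in $\alg P$ minus $1$'' only for elements $\ge w$, and that $\str$ interacts cleanly with the restriction $P\mapsto P'$ --- specifically that for a coalition $C$ with $0\notin C$ one has $\str_{\alg P} C = \str_{\alg P'} C + |C|$, while for $0\in C$ one has $\str_{\alg P}C = \str_{\alg P'}(C\setminus\set 0) + |C\setminus\set 0| + 1$; getting these constants right is what makes condition \eqref{lemmaczhGrMqsnha} transfer correctly in both directions. The other delicate point is confirming that the four listed possibilities are genuinely mutually exclusive and exhaustive, which the case analysis above (keyed on whether $0\in A$, $0\in B$, and whether $A\subset B$) should settle. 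None of the steps is deep; it is all careful translation, with Lemma~\ref{lemmaczhGrMqsnh} and Lemma~\ref{lemmaskzsRsznmCsnsT} doing the real work.
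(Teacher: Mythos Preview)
Your proposal is correct and follows essentially the same route as the paper: both directions are reduced to Lemma~\ref{lemmaczhGrMqsnh}, with a case split governed by the membership of $0$ (and the r\^ole of $w$ as the bottom of $\alg P'$), and the height/strength shift $\pstr{\alg P}X=\pstr{\alg P'}X+|X|$ for $0\notin X$ doing the translation work. The only cosmetic differences are that the paper organizes the necessary direction by the four cases $(0\in A?,\,0\in B?)$ rather than by first splitting on $A\subset B$ versus $|A|=|B|=1+|A\cap B|$, and in the sufficiency direction the paper explicitly records that $A\prec B$ forbids $B=A\cup\{w\}$ before the case analysis; your argument handles the same point in situ via Lemma~\ref{lemmaczhGrMqsnh}\eqref{lemmaczhGrMqsnhc} applied to $\alg P'$.
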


\begin{proof} It is trivial to see that 
\begin{equation}\left.
\parbox{6cm}{each of  \pref{paa},  \pref{pab}, and  \pref{pta} implies that $A<B$ in $\CoalP$.}\right\}
\label{pbxZMlflKnsgl}
\end{equation}

Next, we claim that 
\begin{equation}\left.
\parbox{9 cm}{for every $X\in \CoalP$, the only extensive $X\to X$ map is the identity map $\idmap X\colon X\to X$, defined by $x\mapsto x$.}
\,\,\right\}
\label{pbxbbhRmfPBm}
\end{equation}
We show this by induction on $|X|$. For $|X|\leq 1$, \eqref{pbxbbhRmfPBm} is clear.  For $|X|>1$ and an arbitrary extensive map $\phi\colon X\to X$, the $\phi$-image of the largest element $b$ of $X$ is necessarily $b$, because $b\leq \phi(b)$. Let $Y=:X\setminus\set b$. The restriction 
$\restrict\phi Y$ of $\phi$ to $Y$ is a $Y\to Y$ map by injectivity, and so $\restrict \phi Y\colon Y\to Y$ is an extensive map. Since $\restrict \phi Y=\idmap Y$ by the induction hypothesis, we obtain that $\phi=\idmap X$, proving \eqref{pbxbbhRmfPBm}.

By Lemma~\ref{lemmaczhGrMqsnh} and \eqref{pbxZMlflKnsgl}, \pref{paa} implies that $A\prec B$. 
Assume \pref{pab1}. Then, since $A\prec B$ in $\pCoalP$, 
Lemma~\ref{lemmaczhGrMqsnh} yields that 
\ref{lemmaczhGrMqsnh}\eqref{lemmaczhGrMqsnha} and \ref{lemmaczhGrMqsnh}\eqref{lemmaczhGrMqsnhb} hold for $A$ and $B$ over $\alg P'$. Since $B\neq A\cup\set w$ and $w$ is the smallest element of $\alg P'$,  \ref{lemmaczhGrMqsnh}\eqref{lemmaczhGrMqsnhc} excludes that $A\subset B$. 
Hence,  \ref{lemmaczhGrMqsnh}\eqref{lemmaczhGrMqsnhb} leads to $|A|=|B|=1+|A\cap B|$; this holds not only over $\alg P'$ but also over $\alg P$. That is,  \ref{lemmaczhGrMqsnh}\eqref{lemmaczhGrMqsnhb} holds for $A$ and $B$ over $\alg P$. 
For $x\in P'$, $\height_{\alg P}(x)=1+\height_{\alg P'}(x)$.
Hence,  for every $X\in\pCoalP$, 
\begin{equation}\left.
\begin{aligned}
\pstr{\alg P}X=|X|+\sum_{u\in X}\height_{\alg P}(u)=
|X|+\sum_{u\in X}(\height_{\alg P'}(u)+1) \cr
= |X|+|X|+\sum_{u\in X}\height_{\alg P'}(u)=|X|+\pstr{\alg P'}X
\end{aligned}\,\,\right\}
\label{alignZhfBRdjzG}
\end{equation}
Thus, using \eqref{alignZhfBRdjzG}, $|A|=|B|$, and that  \ref{lemmaczhGrMqsnh}\eqref{lemmaczhGrMqsnha} holds for $A$ and $B$ in over $\alg P'$, 
\[
\pstr{\alg P}B=|B|+\pstr{\alg P'}B=|A|+\pstr{\alg P'}A +1=\pstr{\alg P}A +1,
\]
that is,  \ref{lemmaczhGrMqsnh}\eqref{lemmaczhGrMqsnha} holds for $A$ and $B$ in over $\alg P$. Hence, \eqref{pbxZMlflKnsgl} and Lemma~\ref{lemmaczhGrMqsnh} imply that $A\prec B$ in $\CoalP$, as required.

Next, when assuming  \pref{pab2}, we are going to reduce the task to  \pref{pab1}, which has just been settled. Namely, observe that  \pref{pab1} holds for $A':=A\setminus\set0$ and $B':=B\setminus\set0$. Apart from slight notational changes,
we derived from this situation that  \ref{lemmaczhGrMqsnh}\eqref{lemmaczhGrMqsnhb} holds for $A'$ and $B'$ with $|A'|=|B'|=1+|A'\cap B'|$ and that \ref{lemmaczhGrMqsnh}\eqref{lemmaczhGrMqsnha} also holds for $A'$ and $B'$ over $\alg P$ (and so $A'\prec B'$ in $\CoalP$ but this is not relevant at this moment). These two facts imply that  \ref{lemmaczhGrMqsnh}\eqref{lemmaczhGrMqsnhb} and  \ref{lemmaczhGrMqsnh}\eqref{lemmaczhGrMqsnha} holds for $A=A'\cup\set 0$ and $B=B'\cup\set0$ over $\alg P$, whereby $A\prec B$ in $\CoalP$ by \eqref{pbxZMlflKnsgl} and Lemma~ \ref{lemmaczhGrMqsnh}, as required.

Finally, if \pref{pta}, then $|A|=|B|=|A\cap B|+1$ and $\str B=\str A +\height(w)-\height(0)=\str A +1$, and so \eqref{pbxZMlflKnsgl} together with
Lemma~ \ref{lemmaczhGrMqsnh} yield the required $A\prec B$ in $\CoalP$. 
We have seen that the disjunction of  \pref{paa},  \pref{pab}, and  \pref{pta} is a sufficient condition of $A\prec B$.

In order the see that the above-mentioned disjunction is a necessary condition, the rest of the proof assumes that $A\prec B$ in $\CoalP$. Note in advance that then
\begin{equation}
\text{our assumption, $A\prec B$, excludes that $B=A\cup\set w$,}
\label{eqtxtMPtLnkKmsshVrb}
\end{equation}
since otherwise  $B\neq A$ would give that $w\notin A$  and so $\str B=\str A +1+\height(w)=\str A+2$, which would contradict Lemma~\ref{lemmaczhGrMqsnh}\eqref{lemmaczhGrMqsnha}.  

By Lemma~\ref{lemmaczhGrMqsnh}, \ref{lemmaczhGrMqsnh}\eqref{lemmaczhGrMqsnha} and \ref{lemmaczhGrMqsnh}\eqref{lemmaczhGrMqsnhb} hold for $A$ and $B$ over $\alg P$. 
According to the containment of $0$ in $A$ and $B$, there are four cases to consider. 
First, assume that $0\notin A$ and $0\notin B$.
Then Lemma~\ref{lemmaczhGrMqsnh}\eqref{lemmaczhGrMqsnhd} 
excludes that $|A|<|B|$, whence  \ref{lemmaczhGrMqsnh}\eqref{lemmaczhGrMqsnhb}  imply that $|A|=|B|=1+|A\cap B|$, which holds also over $\alg P'$. In particular, \ref{lemmaczhGrMqsnh}\eqref{lemmaczhGrMqsnhb} holds over $\alg P'$.
Using  \ref{lemmaczhGrMqsnh}\eqref{lemmaczhGrMqsnha} over $\alg P$ and $|A|=|B|$, and computing by \eqref{alignZhfBRdjzG}, we obtain the validity of \ref{lemmaczhGrMqsnh}\eqref{lemmaczhGrMqsnha} over $\alg P'$. Hence, Lemma~\ref{lemmaczhGrMqsnh} gives that $A\prec B$ in $\pCoalP$. Thus, taking \eqref{eqtxtMPtLnkKmsshVrb} also into account, we obtain that  \pref{pab1} holds.

Second, assume that $0\in A$ and $0\in B$, 
and let $A':=A\setminus\set 0$ and $B':=B\setminus\set 0$. 
Lemma~\ref{lemmaskzsRsznmCsnsT} gives easily that 
$A'< B'$ in $\CoalP$.
From  $A\neq B$ and  Lemma~\ref{lemmaczhGrMqsnh}\eqref{lemmaczhGrMqsnhc}, we conclude that $A\not\subset B$. Thus, Lemma~\ref{lemmaczhGrMqsnh}\eqref{lemmaczhGrMqsnhb} gives that 
$|A|=|B|=1+|A\cap B|$. This implies that $|A'|=|B'|=1+|A'\cap B'|$, that is,  \ref{lemmaczhGrMqsnh}\eqref{lemmaczhGrMqsnhb} holds for $A'$ and $B'$. Since $\str B=\str A+1$ by  \ref{lemmaczhGrMqsnh}\eqref{lemmaczhGrMqsnha}, we obtain that   \ref{lemmaczhGrMqsnh}\eqref{lemmaczhGrMqsnha} holds also for $A'$ and $B'$. Combining these facts with Lemma~\ref{lemmaczhGrMqsnh}, we obtain that  
$A'\prec B'$ in $\CoalP$. The previous paragraph has shown that this yields the validity of \pref{pab1} for $A'$ and $B'$. This fact implies trivially that \pref{pab2} holds for $A$ and~$B$.

Third, assume that $0\in A$ but $0\notin B$. Then  
$A\not\subset B$, whereby Lemma~ \ref{lemmaczhGrMqsnh}\eqref{lemmaczhGrMqsnhb} leads to $|A|=|B|=|A\cap B|+1$. That is, 
$A=X\cup\set a$ and $B=X\cup\set b$ with $a\neq b$, $\set{a,b}\cap X=\emptyset$, $a\notin B$, and $b\notin A$. Since both $0$ and $a$ are in the singleton set $A\setminus X$, we have that  $a=0$, and so Lemma~\ref{lemmaczhGrMqsnh}\eqref{lemmaczhGrMqsnha} yields that 
\begin{align*}
\str X+1+\height(b)&= \str B=\str A+1 \cr
&=\str X+1+\height(0)+1=\str X+2.
\end{align*}
Hence, $\height(b)=1$, that is, $b=w$. Consequently, $B=(A\setminus\set 0)\cup\set w$, $w=b\notin A$,  and  \pref{pta} holds.

Fourth, assume that  $0\notin A$ but $0\in B$. If we had that $|A|=|B|=|A\cap B|+1$, then we would have that 
$A=X\cup\set a$ and $B=X\cup\set b$ with $a\neq b$ and $\set{a,b}\cap X=\emptyset$, whereby $b=0$ and $a\neq 0$ would give that $\str A=\str X + 1 + \height(a)> \str X + 1 + \height(b)=\str B$, contradicting Lemma~\ref{lemmaczhGrMqsnh}\eqref{lemmaczhGrMqsnha}. 
Hence, $A\subset B$ by Lemma~\ref{lemmaczhGrMqsnh}\eqref{lemmaczhGrMqsnhb},  $B=A\cup \set0$ by  Lemma~\ref{lemmaczhGrMqsnh}\eqref{lemmaczhGrMqsnhc},  and \pref{paa} holds.

We have seen that whenever $A\prec B$ in $\CoalP$, then the disjunction of  \pref{paa},  \pref{pab}, and  \pref{pta} holds. This completes the proof of Lemma~\ref{lemmaMsrDkTplnnK}.
\end{proof}

As a preparation, let us recall the following  useful  result of Gr\"atzer~\cite{gG_technical_lemma}.

\begin{lemma}[Gr\"atzer~\cite{gG_technical_lemma}]\label{lemmaGGtechn}
Let $\Theta$ be an equivalence relation on a finite lattice $L$ such that the $\Theta$-blocks are intervals. Then $\Theta$ is a congruence if and only if 
\begin{equation*}
\text{$\forall x,y,z\in L$, if $z\prec x$, $z\prec y$, and  $(z,x)\in\Theta$, then $(y,x\vee y)\in\Theta$,}
\end{equation*}
and dually.
\end{lemma}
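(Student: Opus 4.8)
The forward implication is immediate: if $\Theta$ is a congruence and $z\prec x$, $z\prec y$, $(z,x)\in\Theta$, then joining both sides with $y$ gives $(z\vee y,x\vee y)\in\Theta$, and $z\vee y=y$ because $z<y$; hence $(y,x\vee y)\in\Theta$. Meeting with $y$ yields the dual statement in the same way. So the real content is the converse, and by duality it suffices to show that the covering condition together with the hypothesis ``blocks are intervals'' forces $\Theta$ to be compatible with $\vee$; the compatibility with $\wedge$ then follows from the dual covering condition by an order-dual argument, and an equivalence relation compatible with both operations is a congruence.

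My plan for the converse is to isolate the following auxiliary statement and prove it by induction:
\[
\text{if } a\le b,\ (a,b)\in\Theta,\ \text{and } a\le a_1\text{ in }L,\ \text{then } (a_1,\,b\vee a_1)\in\Theta.
\]
Granting this, compatibility with $\vee$ follows quickly: for arbitrary $(a,b)\in\Theta$ and $c\in L$, put $m=a\wedge b$ and $M=a\vee b$; since blocks are intervals, $(m,M)\in\Theta$ with $m\le M$, and applying the auxiliary statement to $m\le M$ with $a_1:=m\vee c$ gives $(m\vee c,M\vee c)\in\Theta$. Now $a\vee c$ and $b\vee c$ both lie in the interval $[m\vee c,M\vee c]$, which is contained in a single $\Theta$-block, so $(a\vee c,b\vee c)\in\Theta$.

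To prove the auxiliary statement I would induct on $N=\ell([a,b\vee a_1])$, the length of the interval from $a$ to $b\vee a_1$ (finite since $L$ is). If $a=a_1$ or $a=b$ the conclusion is trivial, so assume $a<a_1$ and $a<b$; choose a cover $a\prec d\le b$. If $d\le a_1$, the inductive hypothesis applied to $d\le b$ with push-up element $a_1$ (a strictly shorter instance, as $a\prec d$) already gives $(a_1,b\vee a_1)\in\Theta$. Otherwise pick a cover $a\prec a'\le a_1$, necessarily with $a'\ne d$; because blocks are intervals, $(a,d),(d,b)\in\Theta$, and the covering hypothesis applied to $(z,x,y)=(a,d,a')$ gives $(a',d\vee a')\in\Theta$. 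Then the inductive hypothesis applied to $a'\le d\vee a'$ with push-up element $a_1$ (shorter, since $a\prec a'$) yields $(a_1,d\vee a_1)\in\Theta$, while the inductive hypothesis applied to $d\le b$ with push-up element $d\vee a_1$ (shorter, since $a\prec d$) yields $(d\vee a_1,b\vee a_1)\in\Theta$; transitivity gives $(a_1,b\vee a_1)\in\Theta$. The key point making the induction well founded is that every recursive call replaces the bottom element $a$ by an element strictly above it, so $N$ drops; monotonicity of length under interval inclusion does the rest.

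The main obstacle is precisely the design of this auxiliary statement and the choice of the induction parameter: the covering hypothesis only lets one ``lift'' a collapsed prime interval by an element covering its bottom, so after one lift one is left with a collapsed interval that is no longer prime and a push-up element that no longer covers the new bottom; one must re-enter the induction twice (once to climb in the push-up direction, once to climb along $[a,b]$) and check that the measure $\ell([a,b\vee a_1])$ genuinely decreases in both calls. Everything else — the forward direction, the reduction to $\vee$-compatibility, and the order-dual half — is routine once the block-is-an-interval hypothesis is used to move freely between a collapsed pair and all prime intervals it contains.
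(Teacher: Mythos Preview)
The paper does not prove this lemma at all: it is quoted verbatim from Gr\"atzer~\cite{gG_technical_lemma} and used as a black box in the proof of Lemma~\ref{lemmazBfmWvVjGtRx}. So there is nothing to compare against on the paper's side.

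Your argument is correct. The forward direction is trivial, and for the converse your auxiliary statement
\[
a\le b,\ (a,b)\in\Theta,\ a\le a_1 \ \Longrightarrow\ (a_1,b\vee a_1)\in\Theta
\]
with induction on $\ell([a,b\vee a_1])$ goes through exactly as you outline. The length checks are all fine: in each recursive call you replace the bottom $a$ by some $a\prec d$ or $a\prec a'$, and since any maximal chain in $[d,b\vee a_1]$ (resp.\ $[a',d\vee a_1]$) extends by one step to a maximal chain in $[a,b\vee a_1]$, the measure strictly drops. The one place worth saying a word more is the derivation of $(m,M)\in\Theta$ from $(a,b)\in\Theta$: you are using that the $\Theta$-class of $a$ is an interval $[p,q]$, so $p\le a\wedge b$ and $a\vee b\le q$, whence $m,M$ lie in that same class; this is exactly right but could be spelled out in one line. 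The reduction from ``$\Theta$ is $\vee$-compatible'' to the auxiliary statement, and the final appeal to duality for $\wedge$-compatibility, are both routine and correctly handled.
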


\begin{proof}[Proof of Lemma~\ref{lemmazBfmWvVjGtRx}]
By a \emph{doubling congruence} we mean a transitive doubling tolerance, that is, a doubling tolerance that happens to be a congruence. Note that 
\begin{equation}\left.
\parbox{7.2cm}{a doubling tolerance is a doubling congruence if and only if its blocks are pairwise disjoint.}
\right\}
\label{pbxhmzfktLmkKr}
\end{equation}
We prove the lemma by induction on the size $|P|$ of the chain $\alg P$. The base of the induction, $|P|=2$, is trivial, whereby the rest of the proof is devoted to the induction step.
With the notation $L:=\pCoalP$ and $M:=\CoalP$, we know from the induction hypothesis that $L$ is distributive. We 
 need to show that $T$ is a doubling congruence on $L$ and $\dbl LT\cong M$; then Theorem~\ref{thmdoubling} will immediately imply that $M$ is also distributive.

From  \eqref{pbxhmzfktLmkKr} and the definition of $T$ in \eqref{eqTdFlsR}, it follows that 
\begin{equation}\left.
\parbox{6cm}{$T$ is an equivalence relation and each of its blocks is a two-element interval.}\,\,
\right\}
\label{pbxRvncbPhnTThhYs}
\end{equation} 
Furthermore, the distributivity (in fact, the modularity) of $L$ implies that whenever $z\prec x$ and $z\prec y$ in $L$, then $(z,x,y,x\vee y)$ is a covering square in $L$, and dually. Therefore, by Lemma~\ref{lemmaGGtechn}, in order to conclude that $T$ is a doubling congruence, it suffices to show that 
\begin{equation}
\parbox{7.5cm}
{if $T$ collapses an edge of a covering square, then it collapses the opposite edge of the square.}
\label{pbxpMhRmzGsB}
\end{equation}
Let $(A=B\wedge C,B,C,D=B\vee C)$ be a covering square in $L=\pCoalP$. First, assume that a lower edge, say, $A\prec B$, is collapsed by $T$. This means that $B=A\cup\set w$ and $w\notin A$.  Since $A\subset B$ and Lemma~\ref{lemmaczhGrMqsnh}\eqref{lemmaczhGrMqsnhc} allows only one $X$ such that $A\prec X$ and $A\subset X$, it follows from Lemma~\ref{lemmaczhGrMqsnh}\eqref{lemmaczhGrMqsnhb} that, in addition to $B=A\cup\set w$, we have that $|A|=|C|$; note that $w$, the smallest element of $\alg P'$, plays the role of $0$ in Lemma~\ref{lemmaczhGrMqsnh}. Since $w$ is already in $B$, \eqref{lemmaczhGrMqsnhb} and \eqref{lemmaczhGrMqsnhc} of Lemma~\ref{lemmaczhGrMqsnh} give that 
$|B|=|D|$. But then $|C|=|A|<|B|=|D|$,  $C\prec D$, and Lemma~\ref{lemmaczhGrMqsnh}\eqref{lemmaczhGrMqsnhd} yield that $D=C\cup\set w$, whereby $(C,D)\in T$. So, $T$ ``spreads'' from a lower edge to the opposite upper edge.  
Second, assume that an upper edge, say, $C\prec D$, is collapsed by $T$, that is, $D=C\cup\set w$ and $w\notin C$; the argument is almost the same as above. Namely, if we had $|B|<|D|$, then Lemma~\ref{lemmaczhGrMqsnh}\eqref{lemmaczhGrMqsnhd} would give that $D=B\cup \set w$ and we would obtain that $B=D\setminus\set w=C$, a contradiction. Hence, $|B|=|D|$. Since $|A|<|C|$ would contradict $w\notin C$ by Lemma~\ref{lemmaczhGrMqsnh}\eqref{lemmaczhGrMqsnhd},  $|A|=|C|$. Hence,  $|A|=|C|<|D|=|B|$, which together with $A\prec B$ and Lemma~\ref{lemmaczhGrMqsnh}\eqref{lemmaczhGrMqsnhd} yield that $B=A\cup\set w$, whereby $A\prec B$ is collapsed by $T$, as required.  We have seen the validity of \eqref{pbxpMhRmzGsB}, whereby we have shown that $T$ is a doubling congruence on $L$.

Next, we define the following map
\[
\gamma\colon \dbl LT\to M\,\,\text{ by }\,\,(A,k)\mapsto
\begin{cases}
A,&\text{ if }k=0,\cr
A\cup\set 0,&\text{ if }k=1,
\end{cases}
\]
and we are going to show that $\gamma$ is a lattice isomorphism. Since $\gamma$ is trivially a bijection,
it suffices to show that $\gamma$ is an order-isomorphism. Furthermore, since orderings on finite posets are determined by the corresponding covering relations,
our task reduces to proving that $x\prec y$ in $\dbl LT$ if and only if $\gamma(x)\prec \gamma(y)$ in $M$. 
The covering pairs in $\dbl LT$ and those in $M$ are described by \cref{caa}--\cref{cta} from the proof of Theorem~\ref{thmdoubling} and by \pref{caa}--\pref{cta} from Lemma~\ref{lemmaMsrDkTplnnK}, respectively. Therefore,
it suffices to prove that for any $(A,i)$ and $(B,j)$ in $\dbl LT$ and for any $\ell\in\set{1,2,3}$,  
\begin{equation}\left.
\parbox{7cm}{\fcref\ell{} holds for $(A,i)$ and $(B,j)$ if and only if  \fpref\ell{} holds for $\gamma(A,i)$ and $\gamma(B,j)$.}\,\,\right\}
\label{pbxtlRkrdBnhsVk}
\end{equation}

Assume that \cref{caa} holds for $(A,i)$ and $(B,j)$ in 
$\dbl LT$. That is, $(A,i)=(A,0)$ and $(B,j)=(A,1)$. Hence.
$\gamma(A,i)=A$ and $\gamma(B,j)=A\cup\set0$, whereby $\gamma(A,i)$ and $\gamma(B,j)$ satisfy \pref{caa}. Conversely, assume that  $\gamma(A,i)$ and $\gamma(B,j)$ satisfy \pref{caa}. Then $0\notin \gamma(A,i)$ and $0\in \gamma(B,j)=\gamma(A,i)\cup\set 0$ imply that $i=0$ and $j=1$. Hence, by  the definition of $\gamma$, we have that 
$A\cup\set0=\gamma(A,0)\cup \set 0=\gamma(B,1)=B\cup\set 0$, whereby $A=B$. Thus, $(A,i)=(A,0)$ and $(B,j)=(A,1)$ satisfy
 \cref{caa}, as required; this settles \eqref{pbxtlRkrdBnhsVk} for $\ell=1$.

Next, assume that  \cref{cab} holds for $(A,i)$ and $(B,j)$ in $\dbl LT$. That is, $A\prec B$ in $L$ and $i=j$, but $A \npT B$. 
Since  $A \npT B$, we have that $B\neq A\cup\set w$. For the case $i=j=1$, note that $B\neq A\cup\set w$ implies that $B\cup\set 0\neq (A\cup\set 0)\cup\set w$. Thus, no matter if $i=j$ is $0$ or $1$, it follows that  \pref{cab} holds for  $\gamma(A,i)$ and $\gamma(B,j)$. 

Conversely, assume that  \pref{cab} holds for  $\gamma(A,i)$ and $\gamma(B,j)$; there are two cases depending on the containment of $0$ in $\gamma(A,i)$. First, assume that $0\notin\gamma(A,i)$. Then  \pref{pab2} is excluded, so  \pref{pab1} holds for 
 $\gamma(A,i)$ and $\gamma(B,j)$. Hence $0\notin\gamma(B,j)$, 
$i=j=0$, $A=\gamma(A,0)\prec \gamma(B,0)= B$ in $L$,  and $B\neq A\cup\set w$. Let us summarize for later reference that
\begin{equation}
i=j,\,\text{ }\, 
A\prec B\,\text{ in }\, L, \,\text{ and }\,B\neq A\cup\set w. 
\label{eqtxtCttShRk}
\end{equation}
We are going to show that \eqref{eqtxtCttShRk} implies that 
 \cref{cab} holds for $(A,i)$ and $(B,j)$.
Since $A\prec B$ excludes that 
$A=B$, the equality $A\cup\set w=B\cup \set w$ 
would only be possible if we had that $B=A\cup \set w$, excluded above, or $A=B\cup \set w\supset B$, excluded by $A<B$. Thus $A\cup\set w\neq B\cup \set w$, that is, $(A,B)\notin T$ and  $A\npT B$. Therefore, \cref{cab} holds for $(A,i)$ and $(B,j)$, as required. 

Second, assume that $0\in\gamma(A,i)$. Now  \pref{pab1} is excluded, so  \pref{pab2} holds for  $\gamma(A,i)$ and $\gamma(B,j)$. In particular, $0\in\gamma(B,j)$. Hence,
$i=j=1$, $\gamma(A,i)=A\cup\set 0$, and $\gamma(B,j)=B\cup\set 0$. Thus, the validity of \pref{pab2} for these two sets gives that $A=(A\cup\set 0)\setminus\set 0\prec (B\cup\set 0)\setminus\set 0=B$ in $L$ and $B\cup\set 0\neq B\cup\set 0\cup \set w$. Since this non-equality gives that $B\neq A\cup \set w$, \eqref{eqtxtCttShRk} is fulfilled. We already know that 
\eqref{eqtxtCttShRk} implies that  \cref{cab} holds for $(A,i)$ and $(B,j)$. Therefore, we have shown  \eqref{pbxtlRkrdBnhsVk} for $\ell=2$.

Finally,  assume that  \cref{cta} holds for $(A,i)$ and
 $(B,j)$ in $\dbl LT$. That is, $i=1$, $j=0$, and, furthermore,  $A\pT B$, which gives that $w\notin A$ and $B=A\cup\set w$. Hence, $\gamma(A,i)=A\cup\set 0$ and $\gamma(B,j)=B$, and they clearly satisfy  \pref{pta} since 
$(\gamma(A,i)\setminus\set 0)\cup \set w=((A\cup\set 0)\setminus\set 0)\cup \set w=A\cup\set w=B=\gamma(B,j)$. Conversely, assume that \pref{pta} holds for $\gamma(A,i)$ and $\gamma(B,j)$. Then $0\in \gamma(A,i)$ and $0\notin\gamma(B,j)$ yield that 
$i=1$ and $j=0$. Furthermore,  $w\notin \gamma(A,i)=\gamma(A,1)=A\cup\set 0$ give that $w\notin A$, and we also have that
\[B=\gamma(B,j)=(\gamma(A,1)\setminus\set 0)\cup\set w = ((A\cup\set 0)\setminus\set 0)\cup\set w = A\cup\set w .
\]
Hence,  $B\cup \set w=A\cup\set w$ and $(A,B)\in T$. 
Combining this with $A\subset B$ and \eqref{pbxRvncbPhnTThhYs}, we obtain that $A\pT B$. Thus,  \cref{cta} holds for $(A,i)=(A,1)$ and $(B,j)=(B,0)$ in $\dbl LT$. This completes the proof of \eqref{pbxtlRkrdBnhsVk} and that of Lemma~\ref{lemmazBfmWvVjGtRx}.
\end{proof}

\end{document}